\theoremstyle{plain}
\newtheorem{thm}{Theorem}[section]
\newtheorem{lem}[thm]{Lemma}
\newtheorem{rem}[thm]{Remark}
\newtheorem{prop}[thm]{Proposition}
\newtheorem{cor}[thm]{Corollary}
\newtheorem{defn}[thm]{Definition}
\theoremstyle{definition}
\theoremstyle{remark}
\numberwithin{equation}{section}
\newcommand{\BA}{\mathbf{BA}}
\newcommand{\DI}{\mathbf{DI}}
\newcommand{\SL}{\operatorname{SL}}
\newcommand\mr{M_{m,n}}
\newcommand\amr{$A\in M_{m,n}$}
\newcommand\da{Diophantine approximation}
\newcommand{\nz}{\smallsetminus\{0\}}
\newcommand{\Q}{{\mathbb {Q}}}
\newcommand{\R}{{\mathbb{R}}}
\newcommand{\Z}{{\mathbb{Z}}}
\newcommand{\N}{{\mathbb{N}}}
\newcommand\hd{Hausdorff dimension}
\newcommand{\val}{{\boldsymbol{\alpha}}}
\newcommand{\vbe}{{\boldsymbol{\beta}}}
\newcommand{\vom}{{\boldsymbol{\omega}}}
\newcommand{\va}{{\bf a}}
\newcommand{\vb}{{\bf b}}
\newcommand{\vv}{{\bf v}}
\newcommand{\vr}{{\bf r}}
\newcommand{\vs}{{\bf s}}
\newcommand{\vp}{{\bf p}}
\newcommand{\vq}{{\bf q}}
\newcommand{\x}{{\bf x}}
\newcommand{\vu}{{\bf u}}
\newcommand{\vw}{{\bf w}}
\newcommand{\vk}{{\bf k}}
\newcommand{\vy}{{\bf y}}
\newcommand{\ignore}[1]{{}}
\newcommand {\comm}[1]   {\textcolor{red}{#1}}
\newcommand\eq[2]{
\begin{equation}
\label{eq:#1}
{#2}
\end{equation}
}
\newcommand{\equ}[1]{\eqref{eq:#1}}
\title[Bad versus Dirichlet]{A dichotomy phenomenon for \\ Bad minus normed Dirichlet}
\author{Dmitry Kleinbock}
\author{Anurag Rao}
\address{Brandeis University, Waltham \tt{ kleinboc@brandeis.edu}}
\address{Israel Institute of Technology, Haifa \tt{ rao.anurag@technion.campus.ac.il}}
\begin{document}

\begin{abstract}
    Given a norm $\nu$ on $\mathbb{R}^2$, the set of $\nu$-Dirichlet improvable numbers $\mathbf{DI}_\nu$ was defined and studied in the papers \cite{AD, KR1}. {When $\nu$ is the supremum norm, $\mathbf{DI}_\nu = \mathbf{BA}\cup \Q$, where $\mathbf{BA}$ is the set of badly approximable numbers.}
    Each of the sets $\mathbf{DI}_\nu$, like  $\mathbf{BA}$, is of measure zero and satisfies the winning property of Schmidt. Hence for every norm $\nu$, $\mathbf{BA} \cap \mathbf{DI}_\nu$ is winning and thus has full Hausdorff dimension.     In the present article we prove {the following dichotomy phenomenon:} either $\mathbf{BA} \subset \mathbf{DI}_\nu$ or else $\mathbf{BA} \smallsetminus \mathbf{DI}_\nu$ has full Hausdorff dimension.    We give several examples for each of the two cases. The dichotomy is based on whether the \textit{critical locus} of $\nu$ intersects a precompact $g_t$-orbit,  where $\{g_t\}$ is the one-parameter diagonal subgroup of $\SL_2(\R)$ acting on  the space $X$ of unimodular  lattices in $\mathbb{R}^2$. Thus the aforementioned dichotomy follows from the following dynamical statement: for a lattice $\Lambda\in X$, either $g_\mathbb{R} \Lambda$ is unbounded ({and then any precompact $g_{\mathbb{R}_{>0}}$-orbit must eventually avoid a neighborhood of $\Lambda$}), or not, in which case the set of lattices in $X$ whose  $g_{\mathbb{R}_{>0}}$-trajectories are precompact and contain $\Lambda$ in their closure has full \hd.
\end{abstract}

\thanks{The first-named author was supported by NSF Grant DMS-1900560 and DMS-2155111 and by a grant from the Simons Foundation (922989, Kleinbock). This material is based upon work supported by a grant from the Institute for Advanced Study School of Mathematics.
{The second-named author received funding from the European Research Council (ERC) under the European Union’s Horizon 2020 Research and Innovation Program, Grant agreement no.\ 754475.}}

\maketitle

\section{Introduction}

Dirichlet's theorem (\cite[Theorem 1A]{Schmidt}) states that for every real number $\alpha$ {and every $T > 1$}, the set of inequalities
\begin{equation*}\label{dirichlet}
    \begin{cases}  \langle \alpha q \rangle & \leq T^{-1},   \\
    \ \  |q| & <  T\end{cases}
\end{equation*}
has a solution $q \in \N$. Here $\langle \cdot \rangle$ denotes the distance of a real number {to a nearest integer}.
The question of improving Dirichlet's theorem was initiated by Davenport--Schmidt in \cite{DS}. A real number $\alpha$ is said to be \textit{Dirichlet improvable}, written $\alpha \in \DI_\infty$, if there is a $c<1$ such that the system
\begin{equation}\label{DIinfty-def}
    \begin{cases}  \langle \alpha q \rangle &< cT^{-1},   \\
    \ \ |q|&< T\end{cases}
\end{equation}
can be solved in $q \in \N$ for all sufficiently large $T$. \cite[Theorem 1]{DS} shows that $\DI_\infty$ is exactly the set $\BA \cup \Q$.  {In fact, this was known as far back as the paper of Morimoto \cite{Mor}}.
{We recall that} badly approximable numbers are those real numbers whose continued fraction 
{coefficients (partial quotients)} are bounded from  above. 

A normed variant of Dirichlet's theorem (also referred to as Minkowski's approximation theorem) was studied in \cite{AD, KR1}. The starting point is the observation that the defining condition in \eqref{DIinfty-def} can be restated as
\begin{equation*}
    \Lambda_\alpha \cap \left[ {\begin{array}{cc} cT^{-1} & 0 \\ 0 & T  \end{array}}\right]
    B_\infty(1) \neq \{0\}
\end{equation*}
where $\Lambda_\alpha$ denotes the unimodular lattice $\left[ {\begin{array}{cc} 1 & \alpha \\ 0 & 1 \end{array}}\right]\Z^2$ and $B_\infty(1)$ denotes the supremum norm ball in $\R^2$ centered at the origin and with radius $1$.

{Now, let $X$ denote the set of unimodular lattices in $\R^2$,} fix a norm $\nu$ on $\R^2$ and define the \textit{critical radius} of $\nu$:
\begin{equation}\label{critical-radius}
    r_\nu := \sup\big\{ r \in \R: \text{there exists  }\Lambda \in X \text{ with } \Lambda \cap B_\nu(r) = \{0\}\big\}.
\end{equation}
{By Minkowski's convex body theorem, $r_\nu$ is finite.} Let $B_\nu(r)$ denote  the $\nu$-norm ball with radius $r$ centered at the origin. We say that a real number $\alpha$ is \textit{$\nu$-Dirichlet improvable}, written $\alpha \in \DI_\nu$, if there is some $c<1$ with \begin{equation*}
    \Lambda_\alpha \cap \left[ {\begin{array}{cc} cT^{-1} & 0 \\ 0 & T \end{array}}\right]
    B_\nu\left(r_\nu\right) \neq \{0\}
\end{equation*} 
for all sufficiently large $T$. When $\nu$ is the supremum norm (denoted by the subscript $\infty)$, we have that $r_\infty =1$ and so recover the definition of Davenport--Schmidt.
When $\nu$ is the Euclidean norm (denoted by the subscript $2$), we see that $\alpha \in \DI_2$ if and only if there is some $c<1$ such that
\begin{equation*}
    \left(\frac{T\langle\alpha q \rangle}{c}\right)^2 + \left(\frac{q}{T}\right)^2 < \frac{2}{\sqrt{3}}
\end{equation*}
is solvable {in} $q\in \N$ for all sufficiently large $T$.
The normalizing constant $2/\sqrt{3}$ is equal to 
{the square of the critical radius  of the Euclidean norm}, see e.g.\ \cite{KRS} for a discussion and references. 

Consider the following preliminary properties:
\begin{thm}[\cite{KR1}, Theorems 3.1 and 1.3]\label{DI-winning}
For each norm $\nu$ on $\R^2$, the set $\DI_\nu$ is of measure zero but  winning in the sense of Schmidt.
In particular, $\DI_\nu$ has full Hausdorff dimension. 
\end{thm}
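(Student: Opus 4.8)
The plan is to recast membership in $\DI_\nu$ as an orbit condition for the geodesic flow on the space of lattices and then play a Schmidt game. Write $g_t=\operatorname{diag}(e^t,e^{-t})$, let $n_s$ denote the unipotent matrix with $1$'s on the diagonal and $s$ in the upper-right entry, so that $\Lambda_\alpha=n_\alpha\Z^2$, and set $\delta_\nu(\Lambda):=\min\{\nu(v):v\in\Lambda\nz\}$ for the $\nu$-systole — a continuous function on $X\cong\SL_2(\R)/\SL_2(\Z)$ with $\max_X\delta_\nu=r_\nu$, tending to $0$ in the cusp. Hence the \emph{critical locus} $\mathcal C:=\{\Lambda\in X:\delta_\nu(\Lambda)=r_\nu\}$ is a nonempty, compact, nowhere dense subset of $X$. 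Unwinding the definition of $\DI_\nu$ — absorbing the scaling matrix $\operatorname{diag}(cT^{-1},T)$ into $g_t$ together with the constant $c$, exactly as in the sup-norm case — one checks that
\[
\alpha\in\DI_\nu\iff\exists\,\rho<r_\nu\text{ with }\delta_\nu(g_t\Lambda_\alpha)<\rho\text{ for all large }t,
\]
that is, $\DI_\nu=\bigcup_n\{\alpha:\delta_\nu(g_t\Lambda_\alpha)<r_\nu-\tfrac1n\text{ for all }t\gg0\}$: the forward $g_t$-orbit of $\Lambda_\alpha$ must eventually avoid the closed set $\{\delta_\nu\ge\rho\}$, which shrinks down to $\mathcal C$ as $\rho\uparrow r_\nu$.

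For the measure-zero assertion I would invoke the classical fact that for Lebesgue-almost every $\alpha$ the forward orbit $\{g_t\Lambda_\alpha:t\ge0\}$ is dense (indeed equidistributed) in $X$ — via ergodicity of $\{g_t\}$ and a Fubini argument along the expanding horocycle, or directly via the Gauss map. Since $\{\delta_\nu>\rho\}$ is a nonempty open subset of $\{\delta_\nu\ge\rho\}$ for every $\rho<r_\nu$, a dense forward orbit cannot eventually avoid it; hence each set in the above countable union is null, and so is $\DI_\nu$.

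The winning statement is the heart of the matter. Fix $\rho_0<r_\nu$ close to $r_\nu$, put $Y:=\{\delta_\nu\ge\rho_0\}$ (a thin compact neighbourhood of $\mathcal C$), and note that it suffices to prove that $S:=\{\alpha:g_t\Lambda_\alpha\notin Y\text{ for all }t\ge t_0(\alpha)\}\subseteq\DI_\nu$ is $\alpha_0$-winning for a suitable $\alpha_0>0$. I would play Schmidt's $(\alpha_0,\beta)$-game through the standard dictionary between game scales and flow times: to Alice's nested balls $A_i\subset\R$ of radii comparable to $(\alpha_0\beta)^i$ associate times $t_i$ with $\operatorname{rad}(A_i)\,e^{2t_i}=\kappa$ (a fixed constant), so that $g_{t_i}\{\Lambda_\alpha:\alpha\in A_i\}$ is an arc $H_i$ of the expanding horocycle of fixed length $\asymp\kappa$, and $H_{i+1}$ is obtained from $H_i$ by restricting to Bob's sub-arc, then to Alice's sub-arc of relative length $\alpha_0$, and then flowing forward. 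Alice maintains the invariant that $g_sH_i\cap Y=\varnothing$ for an appropriate range of $s\ge0$; then any $\alpha^\ast\in\bigcap_iA_i$ has $g_u\Lambda_{\alpha^\ast}\notin Y$ for all large $u$, so $\alpha^\ast\in S\subseteq\DI_\nu$. The inductive step asks Alice to choose her sub-arc so that its forward $g_t$-translates avoid $Y$ over the relevant window, which she can do by a counting argument once she knows that the set of parameters along a horocycle arc whose $g_t$-orbit meets $Y$ within a bounded time is a union of at most $N$ subintervals ($N=N(\nu)$) of small total relative measure, the smallness being quantitatively improved by taking $\rho_0$ closer to $r_\nu$.

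The main obstacle is exactly this quantitative non-concentration estimate near the critical locus — that a horocycle arc emanating from the complement of $Y$ spends, under the flow, only a small (and boundedly-many-intervals') portion of its length inside $Y$ — together with the bookkeeping needed to make it uniform in Bob's parameter $\beta$, since a single flow step $\tfrac12\log\tfrac1{\alpha_0\beta}$ degenerates as $\beta\to0$ and one must interleave several game moves per unit of flow time. To prove the estimate one writes the condition $g_sn_\phi\Lambda\in Y$ as the simultaneous system $\nu(g_sn_\phi v)\ge\rho_0$ over the boundedly many lattice vectors $v$ (Minkowski) that could realise the systole — each inequality cutting a region bounded by a convex curve out of the $(\phi,s)$-parameter region, so the bad $\phi$-set has bounded combinatorial complexity — and then exploits that $Y$ is a thin neighbourhood of the nowhere dense set $\mathcal C$, together with the transversality of horocycles to the flow direction, to conclude that a horocycle arc cannot linger in $Y$: off $\mathcal C$ one has $\delta_\nu<r_\nu$, and flowing past $\mathcal C$ drives the systole strictly below $\rho_0$ within bounded time along all but a short portion of the arc. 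This follows the template of Schmidt's proof that $\BA$ is winning and its dynamical reformulations (Dani; Kleinbock and collaborators), the new feature being that the set to be avoided is a compact neighbourhood of the critical locus rather than a cusp neighbourhood. Finally, the asserted full \hd\ of $\DI_\nu$ is immediate from the winning property by Schmidt's theorem that winning subsets of $\R$ have Hausdorff dimension $1$.
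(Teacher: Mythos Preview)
This theorem is not proved in the present paper; it is quoted from \cite{KR1}. The paper does, however, drop two hints about how that proof goes: Remark~\ref{thickness-remark} says that \cite[Theorem~2.8]{AGK} (a general ``nondense orbits are winning'' result for orbits avoiding a suitable subset of a homogeneous space) was used to prove Theorem~\ref{DI-winning}, and the introduction notes that Mahler's theory of \emph{irreducible} convex domains was ``central'' to it. Concretely, the argument in \cite{KR1} proceeds by reducing to irreducible norms, proving that for such norms the critical locus $\mathcal{L}_\nu$ is a compact $C^1$-curve transverse to the weak unstable foliation $FH^+$ (this is \cite[Theorem~3.11]{KR1}, quoted here in Proposition~\ref{irred-transversal}), and then invoking the black box from \cite{AGK}. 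Your dynamical reformulation and the measure-zero argument are fine and match the cited approach.

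Where your proposal has a genuine gap is the ``non-concentration estimate'' that is supposed to drive Alice's strategy. Two issues. First, the transversality you invoke --- ``of horocycles to the flow direction'' --- is trivially true and irrelevant; what is actually needed (and nontrivial) is transversality of $\mathcal{L}_\nu$ to $H^+$, i.e.\ $T_z(X)=T_z(\mathcal{L}_\nu)+T_z(Fz)+T_z(H^+z)$. This is exactly the content of \cite[Theorem~3.11]{KR1}, and it requires Mahler's structure theory; you cannot read it off from convexity of the norm alone. Second, your heuristic ``each vector cuts out a convex region, so the bad $\phi$-set has bounded complexity and small measure'' fails outright for parallelogram norms: for the supremum norm one checks that $u_x\Z^2\in\mathcal{L}_\infty$ for \emph{every} $x$, so $\mathcal{L}_\infty$ contains an entire closed expanding horocycle, and an arc $H_i$ landing near it lies wholly inside your set $Y$ --- there is no small sub-arc for Alice to pick. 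The parallelogram case therefore needs a separate argument (in fact the classical one: $\DI_\infty=\BA\cup\Q$), and the general case needs the reduction to irreducible norms before anything like your game can be played. Your outline does not supply either ingredient.
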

See \cite{S} {and a paragraph before Proposition \ref{uv-winning}} for a discussion of Schmidt games and winning. 
The fact that $\BA$ has full Hausdorff dimension goes back to Jarnik in 1928 (see \cite[Theorem 7.1]{BRV} for an elementary proof). 
Thus, natural questions that come up in the study of Theorem \ref{DI-winning} are
\begin{enumerate}
    \item[(a)] Do there exist norms for which $\BA \not\subset \DI_\nu$?
    \item[(b)] {If yes, can one characterize the norms for which $\BA \not\subset \DI_\nu$?}
\end{enumerate}
The existence of norms which satisfy the above non-containment is a consequence of the equality
\begin{equation*}
    \Q= \bigcap_{\nu} \DI_\nu
\end{equation*}
(see \cite[Theorem 1.7]{KR2}).
In the present paper we address Question (b) by giving a convenient dynamical condition on $\nu$ which completely determines whether or not $\BA$ is a subset of  $\DI_\nu$. 
{In fact, our main results are completely dynamical in nature and this paper is a twofold study of limit points of bounded diagonal orbits in $X$ and criteria to detect whether certain submanifolds intersect precompact orbits.}

We briefly explain how dynamics comes into play; the idea essentially comes from the work of Davenport and Schmidt \cite{DS}. 
Each norm $\nu$ gives rise to the following compact subset of $X$:
\begin{equation*}
    \mathcal{L}_\nu := \big\{\Lambda \in X : \Lambda \cap B_\nu\left(r_\nu\right) = \{0\} \big\}.
\end{equation*}
$\mathcal{L}_\nu$ is referred to as the \textit{critical locus} of the norm. 
It follows from the definition of $r_\nu$ and from Mahler's compactness criterion that $\mathcal{L}_\nu$ is a non-empty compact subset of $X$. 
Equivalently, lattices in $\mathcal{L}_\nu$ give the densest lattice packings of $\R^2$ by the domain $B_\nu\left(\frac{r_\nu}{2}\right)$.
We have the following notation for present and future use:
\begin{equation}\label{1-para-notation}
  {  g_t:= \left[ {\begin{array}{cc} e^t & 0 \\ 0 & e^{-t} \end{array}}\right],
   \text{ where } t
    \in \R,\  \text{ and }g_E
    := \left\lbrace g_t
    : t\in E \right\rbrace \text{ for a subset 
$E$ of $\R$}.}\end{equation}
\begin{prop}[\cite{Da}, Theorem 2.20    and \cite{KR1}, Theorem 2.1]\label{dynamical-restatement}  {We have the following two equivalences}:
\begin{enumerate}
    \item[(i)]  A real number $\alpha$ belongs to $\BA$ if and only if 
    \begin{equation*}
      g_{\R_{>0}}\Lambda_\alpha \text{ is precompact in } X.
    \end{equation*}
    \item[(ii)] Given a norm $\nu$ on $\R^2$, we have that $\alpha \in \DI_\nu$ if and only if
there exists $t_0 >0$ and a neighborhood $U \supset \mathcal{L}_\nu$ {such} that
\begin{equation*}
    g_{\R_{>t_0}} \Lambda_\alpha \cap U = \varnothing.
\end{equation*}
\end{enumerate}
\end{prop}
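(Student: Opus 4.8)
The plan is to treat the two parts separately, in each case transferring the arithmetic/geometric condition to the behaviour of the continuous function
\[
  \lambda_1^\nu \colon X \to \R_{>0}, \qquad \lambda_1^\nu(\Lambda) := \min\{\nu(v) : v \in \Lambda \sm \{0\}\},
\]
(with $\lambda_1 := \lambda_1^\infty$), and then invoking Mahler's compactness criterion: a subset $\mathcal K \subseteq X$ is precompact if and only if $\inf_{\Lambda \in \mathcal K}\lambda_1(\Lambda) > 0$. Part (i) is essentially the Dani correspondence and I would only sketch it; the substance of the Proposition for us is part (ii).

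For (i), I would first recall the classical description of $\BA$ via continued fractions (Lagrange): $\alpha \in \BA$ exactly when $\inf_{q \in \N} q\langle q\alpha\rangle > 0$, with rational $\alpha$ lying outside both $\BA$ and this set (and, as one checks directly, having unbounded $g_{\R_{>0}}$-orbit, since $\Lambda_\alpha$ then contains the vector $(0,q)$ for a suitable $q$). Writing a nonzero vector of $\Lambda_\alpha$ as $(p+q\alpha,q)$ with $(p,q) \in \Z^2 \sm\{0\}$, one has $\|g_t(p+q\alpha,q)\|_\infty = \max(e^t|p+q\alpha|,\,e^{-t}|q|)$; for $q=0$ and $t \ge 0$ this is $\ge 1$, and for $q \ne 0$, after replacing $p$ by the nearest integer to $-q\alpha$, it is $\ge \sqrt{|q|\langle q\alpha\rangle}$ by $\max(a,b)\ge\sqrt{ab}$. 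Hence $\inf_q q\langle q\alpha\rangle > 0$ yields a uniform lower bound for $\lambda_1(g_t\Lambda_\alpha)$ over all $t \ge 0$, so $g_{\R_{>0}}\Lambda_\alpha$ is precompact by Mahler; conversely, if $q_n\langle q_n\alpha\rangle \to 0$ then $q_n \to \infty$ and the choice $e^{2t_n} = |q_n|/\langle q_n\alpha\rangle$ balances the two coordinates and forces $\lambda_1(g_{t_n}\Lambda_\alpha)\to 0$ with $t_n\to\infty$, so the orbit is unbounded.

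For (ii), the first step is to unwind the definition: since $\operatorname{diag}(cT^{-1},T) = \sqrt{c}\,g_{-t}$ with $t = \log(T/\sqrt c)$, and a homothety by $\sqrt c$ turns $B_\nu(r_\nu)$ into $B_\nu(\sqrt c\,r_\nu)$, the condition $\Lambda_\alpha \cap \operatorname{diag}(cT^{-1},T)B_\nu(r_\nu) \ne \{0\}$ is equivalent to $\lambda_1^\nu(g_t\Lambda_\alpha) \le \sqrt c\, r_\nu$; as $T \to \infty$ so does $t$, and as $c$ ranges over $(0,1)$ the threshold $\sqrt c\, r_\nu$ ranges over $(0,r_\nu)$. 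So $\alpha \in \DI_\nu$ iff there are $r < r_\nu$ and $t_0$ with $\lambda_1^\nu(g_t\Lambda_\alpha) \le r$ for all $t > t_0$. On the other side, by definition $r_\nu = \max_{\Lambda \in X}\lambda_1^\nu(\Lambda)$ (the maximum being attained — this is exactly what makes $\mathcal L_\nu$ nonempty and compact via Mahler), and $\mathcal L_\nu = (\lambda_1^\nu)^{-1}(r_\nu)$ is the top level set of the continuous function $\lambda_1^\nu$. The forward implication is then immediate: given $r < r_\nu$ as above, $U := \{\Lambda : \lambda_1^\nu(\Lambda) > r\}$ is an open neighbourhood of $\mathcal L_\nu$ avoided by $g_t\Lambda_\alpha$ for $t > t_0$. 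The one point requiring care — the main obstacle, such as it is — is the converse: one must show that \emph{every} neighbourhood $U$ of $\mathcal L_\nu$ contains some superlevel set $K_\delta := \{\lambda_1^\nu \ge r_\nu - \delta\}$. This holds because each $K_\delta$ is compact (closed, and bounded away from the cusp by Mahler) and the $K_\delta$ decrease, as $\delta \downarrow 0$, to $\bigcap_\delta K_\delta = \{\lambda_1^\nu \ge r_\nu\} = \mathcal L_\nu$, so compactness forces $K_\delta \subseteq U$ for small $\delta$; then $g_t\Lambda_\alpha \notin U$ for $t > t_0$ gives $\lambda_1^\nu(g_t\Lambda_\alpha) < r_\nu - \delta$ for $t > t_0$, i.e.\ $\alpha \in \DI_\nu$ with $r = r_\nu - \delta$. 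Throughout I would keep an eye on the open/closed-ball conventions in the definitions of $r_\nu$, $\mathcal L_\nu$, and $\DI_\nu$, but since the inequalities $c < 1$ and $r < r_\nu$ are strict these boundary issues are harmless.
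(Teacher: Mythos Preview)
The paper does not supply its own proof of this proposition; it is stated with citations to \cite{Da} and \cite{KR1} and used thereafter as a black box. Your argument is correct and self-contained. Part (i) is the standard Dani correspondence via Mahler's criterion, carried out cleanly. For part (ii), your reduction of the $\DI_\nu$ condition to the inequality $\limsup_{t\to\infty}\lambda_1^\nu(g_t\Lambda_\alpha)<r_\nu$ is exactly right, and the key point in the converse --- that any open neighbourhood of $\mathcal L_\nu$ contains a superlevel set $K_\delta=\{\lambda_1^\nu\ge r_\nu-\delta\}$ --- is handled correctly via the compactness of the $K_\delta$ (which does require the norm-equivalence step to invoke Mahler) and the nested intersection $\bigcap_\delta K_\delta=\mathcal L_\nu$. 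Your closing remark about the strict inequalities absorbing any open/closed ball ambiguity is also on point.
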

{With this context}, we state our main theorem:
\begin{thm}\label{main-lemma}
Fix a lattice $\Lambda \in X$. If $g_\R\Lambda$ is precompact in $X$, then the set
\begin{equation*}
    \mathcal{S}_\Lambda:= \left\lbrace \alpha \in \BA : \Lambda \in \overline{g_{\R_{>0}}\Lambda_\alpha}\right\rbrace
\end{equation*}
has Hausdorff dimension $1$. Conversely, if  $g_\R\Lambda$ is not precompact in $X$, then for every $\alpha \in \BA$  there exists  $t_0>0$ and a neighbourhood $U \supset \Lambda$ such that
\begin{equation*}
    g_{\R_{>t_0}}\Lambda_\alpha \cap U = \varnothing.
\end{equation*}
\end{thm}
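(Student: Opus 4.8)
We would establish the two implications by quite different arguments.

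\medskip

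\noindent\emph{The converse direction} is short and purely topological. Assume $g_\R\Lambda$ is not precompact and fix $\alpha\in\BA$. The plan is to locate $\Lambda$ relative to the $\omega$-limit set of $\Lambda_\alpha$. By Proposition~\ref{dynamical-restatement}(i) the ray $g_{\R_{>0}}\Lambda_\alpha$ is precompact, so $\omega(\Lambda_\alpha):=\bigcap_{t_0>0}\overline{g_{\R_{>t_0}}\Lambda_\alpha}$ is a nonempty compact set; being an $\omega$-limit set of a flow, it is invariant under the whole one-parameter group $g_\R$, not merely under $g_{\R_{\ge0}}$. Hence every $\mu\in\omega(\Lambda_\alpha)$ has $g_\R\mu\subseteq\omega(\Lambda_\alpha)$ precompact, so $\Lambda\notin\omega(\Lambda_\alpha)$; thus $\Lambda\notin\overline{g_{\R_{>t_0}}\Lambda_\alpha}$ for some $t_0>0$, and compactness of that set yields a neighbourhood $U\ni\Lambda$ with $g_{\R_{>t_0}}\Lambda_\alpha\cap U=\varnothing$. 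Together with Proposition~\ref{dynamical-restatement}(ii) this already gives the dichotomy stated in the abstract.

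\medskip

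\noindent\emph{The main direction} will be a concatenation (Cantor-type) construction in continued fractions. Write $\BA_M:=\{[0;a_1,a_2,\dots]:a_i\le M\ \text{for all }i\}$; since $\bigcup_M\BA_M=\BA\cap(0,1)$, countable stability of Hausdorff dimension and $\hd(\BA)=1$ give $\hd(\BA_M)\to1$ as $M\to\infty$. As $\hd(\mathcal S_\Lambda)\le1$ is trivial, it therefore suffices, for each fixed $M$, to produce a subset of $\mathcal S_\Lambda$ of Hausdorff dimension at least $\hd(\BA_M)$. I would use the standard dictionary between the $g_t$-action on $X$ and continued fractions: $g_{\R_{>0}}\Lambda_\alpha$ is precompact exactly when $\alpha\in\BA$, its cusp excursions are located (in time and depth) by the CF digits of $\alpha$, and if those digits are all $\le M$ the ray stays in a fixed compact $\mathcal C_M\subset X$. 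The one input provided by the hypothesis is that a lattice with precompact $g_\R$-orbit is coded by a \emph{bi-infinite} bounded sequence of symbols — the forward half being the CF of one badly approximable number and the backward half the CF of another; applied to $\Lambda$ this produces $M_\Lambda\in\N$ and digit-bounded CF expansions of the two geodesic endpoints $\xi^{\pm}(\Lambda)$, all digits $\le M_\Lambda$.

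\medskip

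\noindent\emph{The construction.} Fix $\delta_j\downarrow 0$. I would prescribe the CF digits of $\alpha$ in blocks: along a very sparse sequence of positions $k_1<k_2<\cdots$, the block at position $k_j$ is a \emph{steering block} $B_j$ of length $\ell_j=\ell(\delta_j)$, consisting of a long initial segment of the CF of $\xi^-(\Lambda)$ written backwards followed by a long initial segment of the CF of $\xi^+(\Lambda)$; at all other positions the digit ranges freely in $\{1,\dots,M\}$, with a free stretch of length $N_j$ between $B_j$ and $B_{j+1}$, the $N_j$ chosen so large that $\ell_1+\dots+\ell_{j+1}=o(N_1+\dots+N_j)$. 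Because every entry of $B_j$ is $\le M_\Lambda$, inserting $B_j$ causes no cusp excursion deeper than $\mathcal C_{M_\Lambda}$ allows, so $g_{\R_{>0}}\Lambda_\alpha$ stays in $\mathcal C_{\max(M,M_\Lambda)}$ and $\alpha\in\BA$; and by the renormalisation built into the coding, once $g_t\Lambda_\alpha$ has read the digits through the end of $B_j$ its base point is $O(\delta_j)$-close to the lattice whose geodesic has endpoints $\xi^+(\Lambda)$ and $\xi^-(\Lambda)$, that is, to $\Lambda$, at a time $t_j\to\infty$ — and this holds irrespective of the digits read before $B_j$. Hence $g_{t_j}\Lambda_\alpha\to\Lambda$ and $\Lambda\in\overline{g_{\R_{>0}}\Lambda_\alpha}$, so every $\alpha$ produced lies in $\mathcal S_\Lambda$. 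Since each steering block contracts the ambient CF cylinder by a factor $\rho_j$ with $-\log\rho_j\lesssim\ell_j\log M_\Lambda$ while adding no branching, whereas the free stretches carry the full branching of the $\BA_M$ Cantor set, the sparsity of the $B_j$ together with the mass distribution principle give the resulting set Hausdorff dimension at least $\hd(\BA_M)$; letting $M\to\infty$ finishes the proof.

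\medskip

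\noindent\emph{The main obstacle} is the renormalisation claim in the previous paragraph: one must make quantitative how many symbols of the expansions of $\xi^{\pm}(\Lambda)$ need to be matched in order for $g_t\Lambda_\alpha$ to enter the $\delta_j$-ball around $\Lambda$, and — crucially, since this is exactly what legitimises the unlimited concatenation — check that the requisite time $t_j$ and the accuracy achieved do not degrade as the earlier, freely chosen digits pile up, and that the trajectory never leaves $\mathcal C_{\max(M,M_\Lambda)}$. Once those estimates are in hand, both the Hausdorff-dimension bookkeeping and the fact $\lim_M\hd(\BA_M)=1$ are routine.
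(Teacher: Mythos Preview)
Your proposal is correct and follows essentially the same route as the paper. Both encode $\Lambda$ by a bi-infinite bounded continued-fraction sequence (the paper's $x=[0;b_1,b_2,\dots]$ and $y=[0;c_1,c_2,\dots]$ are exactly your $\xi^{+}(\Lambda)$ and $\xi^{-}(\Lambda)$), insert the steering blocks $(c_m,\dots,c_1,b_1,\dots,b_m)$ at sparse positions among freely chosen digits $\le L$, and then let $L\to\infty$; the paper carries out the dimension bookkeeping via Urbanski's lemma (a packaged mass-distribution estimate) rather than comparing directly to $\hd(\BA_M)$, and it also tracks a parity/sign issue ($\varepsilon_{k_m}=1$, forcing the insertion position $k_m$ to be even) that you should not overlook when making the renormalisation claim precise.
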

\begin{rem}\label{thickness-remark} \rm
The converse is elementary and left to the reader as an exercise, {see} \cite[Proposition 4.1]{KR2}. Thus  the main content of the theorem is the first part.
Its proof below can be easily modified to show that the intersection of $\mathcal{S}_\Lambda$ with any open interval has Hausdorff dimension $1$. However, $\mathcal{S}_\Lambda$ is not winning as follows from \cite[Theorem 2.8]{AGK}. (This last theorem was used to prove Theorem \ref{DI-winning}.)
\end{rem}
As a corollary of Proposition \ref{dynamical-restatement} and Theorem \ref{main-lemma}, we have our dichotomy phenomenon:
\begin{thm}\label{dichotomy-thm}
Fix a norm $\nu$ on $\R^2$. If $\mathcal{L}_\nu$ contains a lattice $\Lambda$ {such that }
    $g_\R \Lambda$ is precompact in $X$,
then $\BA \smallsetminus \DI_\nu$ has full Hausdorff dimension.
Conversely, if no such lattice exists, then $\BA \subset \DI_\nu$.
\end{thm}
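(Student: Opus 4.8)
The plan is to obtain Theorem~\ref{dichotomy-thm} as a formal consequence of the dynamical description of $\DI_\nu$ in Proposition~\ref{dynamical-restatement}(ii) together with Theorem~\ref{main-lemma}, arguing the two implications separately: the first via the main part of Theorem~\ref{main-lemma}, the second via its (elementary) converse combined with compactness of the critical locus. For the first implication, assume $\mathcal{L}_\nu$ contains a lattice $\Lambda$ with $g_\R\Lambda$ precompact, so that by Theorem~\ref{main-lemma} the set $\mathcal{S}_\Lambda=\{\alpha\in\BA:\Lambda\in\overline{g_{\R_{>0}}\Lambda_\alpha}\}$ has Hausdorff dimension $1$. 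I will show that $\mathcal{S}_\Lambda\cap\DI_\nu$ is countable; since $\mathcal{S}_\Lambda\subseteq\BA$, this gives a subset $\mathcal{S}_\Lambda\smallsetminus\DI_\nu$ of $\BA\smallsetminus\DI_\nu$ that still has Hausdorff dimension $1$ (Hausdorff dimension is countably stable), and as $\BA\smallsetminus\DI_\nu\subseteq\R$ has dimension at most $1$, we conclude it has full dimension.

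So let $\alpha\in\mathcal{S}_\Lambda\cap\DI_\nu$. By Proposition~\ref{dynamical-restatement}(ii) there are $t_0>0$ and an open set $U\supseteq\mathcal{L}_\nu$ with $g_{\R_{>t_0}}\Lambda_\alpha\cap U=\varnothing$. Since $g_{(t_0,\infty)}\Lambda_\alpha$ lies in the closed set $X\smallsetminus U$, so does $\overline{g_{(t_0,\infty)}\Lambda_\alpha}$; as $\overline{g_{\R_{>0}}\Lambda_\alpha}=g_{[0,t_0]}\Lambda_\alpha\cup\overline{g_{(t_0,\infty)}\Lambda_\alpha}$ while $\Lambda\in\mathcal{L}_\nu\subseteq U$, the only possibility is $\Lambda=g_s\Lambda_\alpha$ for some $s\in[0,t_0]$. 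But $\Lambda=g_s\Lambda_\alpha$ forces the horizontal vector $g_s(1,0)=(e^s,0)$ to be a primitive vector of $\Lambda$; a lattice has at most two primitive vectors on the horizontal axis, so $s$ is uniquely determined by $\Lambda$, and then $\Lambda_\alpha=g_{-s}\Lambda$ is a fixed lattice, which pins down $\alpha$ up to an integer. Hence $\mathcal{S}_\Lambda\cap\DI_\nu$ is countable, completing this direction.

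For the converse, suppose no lattice in $\mathcal{L}_\nu$ has a precompact $g_\R$-orbit, and fix an arbitrary $\alpha\in\BA$. For each $\Lambda\in\mathcal{L}_\nu$ the converse half of Theorem~\ref{main-lemma} supplies $t_0(\Lambda)>0$ and an open neighbourhood $U_\Lambda\ni\Lambda$ with $g_{\R_{>t_0(\Lambda)}}\Lambda_\alpha\cap U_\Lambda=\varnothing$. By compactness of $\mathcal{L}_\nu$, finitely many $U_{\Lambda_1},\dots,U_{\Lambda_k}$ cover it; put $U:=\bigcup_{i=1}^k U_{\Lambda_i}\supseteq\mathcal{L}_\nu$ and $t_0:=\max_i t_0(\Lambda_i)$. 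Then $g_{\R_{>t_0}}\Lambda_\alpha\cap U=\varnothing$, so $\alpha\in\DI_\nu$ by Proposition~\ref{dynamical-restatement}(ii); since $\alpha\in\BA$ was arbitrary, $\BA\subseteq\DI_\nu$.

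The substantive content is entirely in Theorem~\ref{main-lemma} (full-dimensionality of $\mathcal{S}_\Lambda$); the deduction above is routine, and the only point within it that requires a moment's care is the transient possibility $\Lambda=g_s\Lambda_\alpha$ — that $\Lambda$ might sit on the forward orbit of $\Lambda_\alpha$ rather than being a genuine accumulation point of it — which the counting argument in the second paragraph disposes of.
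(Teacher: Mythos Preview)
Your deduction is correct and is precisely the argument the paper intends (it merely states Theorem~\ref{dichotomy-thm} as a corollary of Proposition~\ref{dynamical-restatement} and Theorem~\ref{main-lemma}, without spelling out the compactness and closure steps you supply). One minor simplification is available in the first direction: the transient case $\Lambda=g_s\Lambda_\alpha$ is in fact impossible, since $(e^s,0)\in g_s\Lambda_\alpha$ would place a nonzero horizontal vector in $\Lambda$, whence $g_{-t}\Lambda$ escapes to infinity and $g_\R\Lambda$ fails to be precompact; hence $\mathcal{S}_\Lambda\cap\DI_\nu=\varnothing$ outright, and the countability argument is not needed.
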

\begin{rem} \rm
The converse has already been proved in the more general multidimensional setting of weighted approximation of systems of linear forms, {see} \cite[Proposition 4.1]{KR2}.
\end{rem}
It can be shown that the Euclidean critical locus $\mathcal{L}_2$ intersects a precompact $\R$-orbit (in fact, quite a few of them). Moreover, there is a very large class of norms which {also} satisfy this condition. Let us say that a norm $\nu$ on $\R^2$ is 
\textit{irreducible} if whenever $\eta\ne\nu$ is a  norm on $\R^2$ with 
$\eta(\vv)\ge\nu(\vv)$ for any $\vv\in\R^2$, we have $r_\eta > r_\nu$.
Mahler  introduced this distinguished set of norms (or rather, in his terminology, of convex bounded symmetric domains)   in \cite{Ma1}, and they turned out to be central to proving Theorem \ref{DI-winning} among other results.
Examples of irreducible norms are those 
whose unit balls are ellipses, parallelograms and Reinhardt's curvilinear octagon (see \cite[Examples 2.2, 2.3, 3.3]{KRS}).
The critical locus $\mathcal{L}_\nu$ 
of an irreducible norm $\nu$ 
which does not come from a parallelogram is necessarily a one-dimensional $\mathcal{C}^1$-submanifold  of $X$ (see \cite[Theorem 3]{Ma2}). Considerations in the tangent bundle of $X$ then allow us to conclude that $\mathcal{L}_\nu$ must intersect some precompact orbit $g_\R \Lambda$ (see Proposition \ref{irred-transversal} below). Thus, we obtain as a corollary:
\begin{thm}\label{L_2-thm}
If $\nu$ is an irreducible norm on $\R^2$ 
whose unit ball is not a parallelogram, then $\BA \smallsetminus \DI_\nu$ has full Hausdorff dimension. In particular, $\BA \smallsetminus \DI_2$ has full Hausdorff dimension.
\end{thm}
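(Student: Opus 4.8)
The plan is to derive Theorem~\ref{L_2-thm} from Theorem~\ref{dichotomy-thm}: one only has to verify, for an irreducible norm $\nu$ whose unit ball is not a parallelogram, that $\mathcal L_\nu$ contains a lattice $\Lambda$ with $g_\R\Lambda$ precompact in $X$. The final assertion about $\DI_2$ is then a special case, since the Euclidean unit disc is an ellipse, so $\nu_2$ is irreducible with a unit ball that is not a parallelogram (\cite[Example~2.2]{KRS}). The one non-elementary geometric input needed is Mahler's structure theorem \cite[Theorem~3]{Ma2}, which guarantees that for such $\nu$ the critical locus $\mathcal L_\nu$ is a compact $\mathcal C^1$ $1$-submanifold of $X$; since it is compact and has no boundary, each of its (finitely many) connected components is $\mathcal C^1$-diffeomorphic to a circle.

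The heart of the matter is the dynamical statement of Proposition~\ref{irred-transversal}, that such an $\mathcal L_\nu$ meets a precompact $g_\R$-orbit. Identify $X$ with the unit tangent bundle of the modular surface $\Hyp/\SL_2(\Z)$, so that $\{g_t\}$ becomes a reparametrization of the geodesic flow; each $\Lambda\in X$ then determines a backward endpoint $\xi^-(\Lambda)$ and a forward endpoint $\xi^+(\Lambda)$ in $\partial\Hyp=\R\cup\{\infty\}$, and $g_\R\Lambda$ is precompact in $X$ if and only if both $\xi^-(\Lambda)$ and $\xi^+(\Lambda)$ belong to $\BA$. The maps $\xi^\pm$ are smooth submersions whose level sets are the weak-unstable, respectively weak-stable, leaves of the flow; in the $\Ad(g_t)$-eigenspace splitting $T_\Lambda X\cong\mathfrak{sl}_2(\R)=\R h\oplus\R e\oplus\R f$, with $h$ the flow direction and $e$, $f$ the expanding and contracting directions, the weak-stable tangent plane is $\R h\oplus\R f$ and the weak-unstable tangent plane is $\R h\oplus\R e$. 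Fix a component $\gamma$ of $\mathcal L_\nu$. If $T_\Lambda\gamma=\R h$ for every $\Lambda\in\gamma$, then $\gamma$ is a single $g_\R$-orbit and, being compact, is a periodic orbit---in particular precompact---and we are done. Otherwise, the ``considerations in the tangent bundle'' are used to produce a closed subarc $D\subset\gamma$ along which the tangent line has both a nonzero $e$-component and a nonzero $f$-component, i.e.\ $D$ is transverse to the weak-stable and to the weak-unstable foliation. Shrinking $D$, the restrictions $\xi^+|_D$ and $\xi^-|_D$ are $\mathcal C^1$ diffeomorphisms onto intervals; since $\BA$ is winning \cite{S}, and the winning property is inherited by preimages under $\mathcal C^1$ diffeomorphisms and by finite intersections, the set $(\xi^+|_D)^{-1}(\BA)\cap(\xi^-|_D)^{-1}(\BA)$ is winning in $D$, in particular non-empty, and any $\Lambda$ in it satisfies $\xi^\pm(\Lambda)\in\BA$, so $g_\R\Lambda$ is precompact. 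For the Euclidean norm no finesse is needed at this step: $\mathcal L_2$ is the $\SO(2)$-orbit of the hexagonal lattice, whose tangent direction at each point is the compact direction $e-f$, which has nonzero $e$- and $f$-components, so $\mathcal L_2$ is transverse to both foliations throughout.

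The main obstacle is the construction of the subarc $D$, equivalently the statement that the critical locus of an irreducible non-parallelogram norm cannot be ``tangent to the stable or unstable world'' along a whole arc. Mahler's $\mathcal C^1$ statement by itself is not enough; one must invoke his more precise description of critical lattices---each possesses at least two non-proportional pairs of $\nu$-minimal vectors, and $\mathcal L_\nu$ is traced out as one varies which such pairs are pinned to $\partial B_\nu(r_\nu)$---in order to compute the tangent line of $\mathcal L_\nu$ inside $\mathfrak{sl}_2(\R)$ and to check that irreducibility prevents it from lying in $\R h\oplus\R f$ (or in $\R h\oplus\R e$) on an entire subarc. The borderline case in which $\gamma$ is contained in a single weak-stable (or weak-unstable) leaf has to be ruled out by the same structural information, for on such a leaf $\xi^+$ (respectively $\xi^-$) is constant and could a priori take a value outside $\BA$, which would obstruct the argument. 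Once $D$ has been located, the rest is routine: it rests only on the winning property of $\BA$ and its stability under $\mathcal C^1$ coordinate changes.
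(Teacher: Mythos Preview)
Your overall strategy coincides with the paper's: reduce to Theorem~\ref{dichotomy-thm} by locating a lattice in $\mathcal L_\nu$ with precompact $g_\R$-orbit, and find that lattice by combining transversality of $\mathcal L_\nu$ to the weak-stable and weak-unstable foliations with the winning property of $\BA$. Your endpoint-map formulation is equivalent to the paper's local-coordinate formulation via the maps $(x,t,y)\mapsto v_yg_tu_x\Lambda$ and $(x,t,y)\mapsto u_xg_tv_y\Lambda$ in Proposition~\ref{commutators}.

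The difference is in how the transversality is handled. You treat it as an open obstacle, introduce a case split (tangent identically in the flow direction versus not), and then acknowledge that ruling out ``tangent to the stable or unstable world along a whole arc'' still requires a separate computation from Mahler's structural description. The paper avoids all of this: transversality to the weak-\emph{unstable} foliation (i.e.\ $T_z(X)=T_z(\mathcal L_\nu)+T_z(Fz)+T_z(H^+z)$) is already established \emph{at every point} of $\mathcal L_\nu$ in \cite[Theorem~3.11]{KR1}, and transversality to the weak-\emph{stable} foliation follows from this by the one-line symmetry trick of Proposition~\ref{irred-transversal}: conjugating by the coordinate flip $p=\left[\begin{smallmatrix}0&1\\1&0\end{smallmatrix}\right]$ sends $\mathcal L_\nu$ to $\mathcal L_{\nu\circ p}$ (again irreducible, non-parallelogram) and swaps $H^+$ with $H^-$, so the $H^+$-transversality of $\mathcal L_{\nu\circ p}$ at $pz$ pulls back to $H^-$-transversality of $\mathcal L_\nu$ at $z$.

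Once transversality holds at every point, your periodic-orbit case and the search for a good subarc $D$ disappear: any local $\mathcal C^1$ parametrization $\phi$ of $\mathcal L_\nu$ has $x'(0)\ne 0$ and $y'(0)\ne 0$ in both coordinate systems, so the two winning sets can be pulled back and intersected immediately (this is Proposition~\ref{prop-transversal}). So your proposal is on the right track but leaves as an obstacle precisely the step the paper dispatches by citing \cite{KR1} and applying the flip $p$; that is the missing ingredient you should supply rather than the direct tangent-line computation you outline.
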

We also mention that, since the critical locus of a hexagonal norm is a singleton (see \cite[Example 2.4]{KRS}), one can easily construct norms $\nu$ which are not irreducible and for which $\BA \smallsetminus \DI_\nu$ has full dimension. For example, if $\nu$ is a norm with $\mathcal{L}_\nu = \{\Lambda_0\}$, one can choose an element $g \in \SL_2(\R)$ which makes $g_\R g\Lambda_0$ precompact, and then notice that $\mathcal{L}_{\nu \circ g^{-1}} = g\mathcal{L}_\nu$.

\smallskip

We organize the paper as follows.
In \S\ref{bounded-orbits} we explore the structure of $X$ more carefully to establish, under the assumption that $\nu$ is a non-parallelogram irreducible norm, the existence of lattices in $\mathcal{L}_\nu$ with precompact $\R$-orbits. With this out of the way, all that remains is to prove Theorem \ref{main-lemma} in the case when the mentioned lattice $\Lambda$ has a precompact $\R$-orbit. To prove it we use a symbolic representation of the diagonal flow. In \S\ref{cross-section} we study a specific Poincare section for $X$ which arises in the exposition of \cite{C2}. 
From this it becomes apparent that the set $\mathcal{S}_\Lambda$ from Theorem \ref{main-lemma} consists of badly approximable {numbers} having certain predetermined blocks of {coefficients} appearing in their continued fraction expansion. We then apply 
a dimension estimate from \cite{U} in \S\ref{hd-computation} to show this set has full Hausdorff dimension. {Some remarks on possible generalizations of the set-up of this paper are made in the last section.}

\section{Critical loci which intersect precompact orbits}\label{bounded-orbits}

We denote $G:= \SL_2(\R)$ and $\Gamma := \SL_2(\Z)$. Identify $X$ with $G/\Gamma$ via the map $g \mapsto g\Z^2$. 
In addition to the notation of \eqref{1-para-notation},
for $x,y,
    \in \R$ denote
\begin{equation*}u_x :=
    \left[ {\begin{array}{cc} 1 & x \\ 0 & 1 \end{array}}\right]\text{ and } v_y := \left[ {\begin{array}{cc} 1 & 0 \\ y  & 1 \end{array}}\right].  
\end{equation*}

We next have the stability of precompact orbits under the action of certain subgroups of $G$:
\begin{prop}\label{commutators}
{For any} $\Lambda \in X$ 
there exists $\varepsilon >0$ such that the maps
\begin{equation}\label{local-coord}
    (-\varepsilon,\varepsilon)^3 \to X;\ (x,t,y) \mapsto v_y g_t u_x\Lambda \ \text{ and } \ (x,t,y) \mapsto u_x g_t v_y \Lambda
\end{equation}
are diffeomorphisms.
Moreover, {for any fixed $(x,t,y) \in \R^3$}, we have the equivalences:
\begin{equation}\label{u-precompact}
    g_{\R_{>0}}\left(v_y g_t u_x \Lambda\right) \text{ is precompact in } X \iff  g_{\R_{>0}}\left( u_x \Lambda\right) \text{ is precompact in } X
\end{equation}
{and}
\begin{equation}\label{v-precompact}
    g_{\R_{<0}}\left(u_x g_t v_y \Lambda\right) \text{ is precompact in } X \iff g_{\R_{<0}} \left(v_y \Lambda \right) \text{ is precompact in } X.
\end{equation}
\end{prop}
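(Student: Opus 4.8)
The plan is to treat the two assertions of the proposition separately: first that \eqref{local-coord} defines diffeomorphisms onto open neighborhoods of $\Lambda$, and then the two precompactness equivalences.

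For the first assertion I would argue in the standard way. Since the right $\Gamma$-action on $G$ is free and properly discontinuous, the orbit map $\phi_\Lambda\colon G\to X$, $g\mapsto g\Lambda$, is a covering map and hence restricts to a diffeomorphism of some neighborhood $W$ of $e\in G$ onto an open neighborhood of $\Lambda$. It therefore suffices to show that the two multiplication maps $\R^3\to G$ given by $(x,t,y)\mapsto v_yg_tu_x$ and $(x,t,y)\mapsto u_xg_tv_y$ are local diffeomorphisms at the origin: both send the origin to $e$, and their differentials there carry the standard basis of $\R^3$ to the triple consisting of the lower-triangular nilpotent, the traceless diagonal, and the upper-triangular nilpotent elements of $\Lie(G)$, which is a basis. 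The inverse function theorem then produces $\varepsilon>0$ for which each map carries $(-\varepsilon,\varepsilon)^3$ diffeomorphically onto an open subset of $W$; composing with $\phi_\Lambda$ exhibits \eqref{local-coord} as diffeomorphisms onto open neighborhoods of $\Lambda$.

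For the equivalences the key input is the pair of commutation relations $g_sv_y=v_{e^{-2s}y}\,g_s$ and $g_su_x=u_{e^{2s}x}\,g_s$, which follow from a one-line matrix computation. Fix $(x,t,y)\in(-\varepsilon,\varepsilon)^3$. For $s>0$ I would rewrite
\[
g_s\bigl(v_yg_tu_x\Lambda\bigr)=v_{e^{-2s}y}\,g_{s+t}u_x\Lambda ,
\]
and observe that, as $s$ ranges over $(0,\infty)$, the factor $v_{e^{-2s}y}$ remains in the fixed compact set $K:=\{v_w:|w|\le\varepsilon\}\subset G$. Hence $g_{\R_{>0}}(v_yg_tu_x\Lambda)$ is contained in $K\cdot\{g_ru_x\Lambda:r>t\}$, and conversely, using $v_{-e^{-2s}y}\,g_s(v_yg_tu_x\Lambda)=g_{s+t}u_x\Lambda$ with $r=s+t$, the set $\{g_ru_x\Lambda:r>t\}$ is contained in $K\cdot g_{\R_{>0}}(v_yg_tu_x\Lambda)$. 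Since left multiplication by a fixed compact set preserves relative compactness in both directions, one of these orbits is precompact exactly when the other is; and $\{g_ru_x\Lambda:r>t\}$ differs from $g_{\R_{>0}}(u_x\Lambda)$ only by an arc contained in a compact set (the image of a bounded time-interval), so the two have the same relative compactness. This yields the first equivalence. The second equivalence \eqref{v-precompact} is the mirror image, obtained by using $g_su_x=u_{e^{2s}x}\,g_s$ with $s<0$ — so that $u_{e^{2s}x}$ stays in the fixed compact set $\{u_w:|w|\le\varepsilon\}$ — and running the identical argument for negative times.

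I do not expect a genuine obstacle here. The two points that require a little care are: verifying that the conjugated parameters ($e^{-2s}y$ for $s>0$, respectively $e^{2s}x$ for $s<0$) indeed stay bounded, so that the translating factors live in a single compact set; and the elementary observation that adjoining or deleting a bounded time-segment does not affect whether a one-sided $g_t$-orbit is relatively compact.
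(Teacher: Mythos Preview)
Your proposal is correct and follows the same line as the paper: the diffeomorphism assertion is handled by the Lie-theoretic local coordinates (the paper simply cites \cite[Theorem 2.10.1]{V}), and the precompactness equivalences rest on exactly the commutation relations $g_tv_y=v_{ye^{-2t}}g_t$ and $g_tu_x=u_{xe^{2t}}g_t$ that the paper records as \eqref{commutator}. You have merely spelled out the compactness argument that the paper leaves to the reader (or to \cite[Proposition 2.12]{Da}).
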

\begin{proof}
The first assertion follows from \cite[Theorem 2.10.1]{V}.
For the 
{remaining part of the proposition}, note that  {for any $s \in \R$}
\begin{equation}\label{commutator}
     {g_s v_y g_t u_x \Lambda = v_{ye^{-2s}} g_{s+t} u_x \Lambda\  \text{ and }\  g_s u_x g_t v_y \Lambda = u_{xe^{2s}} g_{s +t} v_y \Lambda.}
\end{equation}
Alternatively, see \cite[Proposition 2.12]{Da}.
\end{proof}
Our main tool in establishing the existence of precompact orbits passing through critical loci is the following proposition which 
{constructs a winning set of} precompact orbits along the $u_x$ and $v_y$ directions about any fixed lattice. {It is a well known application of Schmidt's results in \cite{S}; we give an elementary proof here for convenience (see {also} \cite[Theorem 3.7]{KW} and \cite[Theorem 1.3]{Mc}).} 

For the benefit of the reader we recall the rules of Schmidt's game introduced in \cite{S}. It
involves two parameters $\alpha,\beta\in(0,1)$  and is played by two players Alice and Bob on a complete metric space (which we shall take to be the set of real numbers) with a target set $S$. Bob starts the game by choosing a closed ball $B_0=B(x_0,r_0)$ in $\R$ with center $x_0$ and radius $r_0$. After Bob chooses a closed ball $B_i = B(x_i,{r}_i) $, Alice chooses  $A_i = B(x_i', {r}_i')\subset B_i$ with ${r}'_i=\alpha {r}_i$, 
 and then Bob chooses  $B_{i+1} = B(x_{i+1},{r}_{i+1}) \subset A_i$ with ${r}_{i+1}=\beta {r}'_{i} $, 
etc. Alice wins the game if the unique point  $\bigcap_{i=0}^\infty A_i=\bigcap_{i=0}^\infty B_i$ belongs to $S$, and Bob wins otherwise. The set $S$ is \textit{$(\alpha,\beta)$-winning} if Alice has a winning strategy, 
and is \textit{winning} if it is $(\alpha,\beta)$-winning for some $\alpha > 0$ and all $\beta\in(0,1)$.
\begin{prop}\label{uv-winning}
For any $\Lambda \in X$,
the sets
\begin{equation*}
    \left\lbrace x\in \R: g_{\R_{>0}}u_x\Lambda \text{ is precompact in } X \right\rbrace\  \text{ and } \ 
    \left\lbrace y\in \R: g_{\R_{<0}}v_y\Lambda \text{ is precompact in } X \right\rbrace
\end{equation*}
are winning. 
\end{prop}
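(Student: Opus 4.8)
The plan is to prove the $u_x$-statement directly for every $\Lambda\in X$ and to deduce the $v_y$-statement from it by a conjugation. For the deduction, set $w:=\left[{\begin{array}{cc} 0 & 1 \\ -1 & 0\end{array}}\right]\in G$; then $wg_tw^{-1}=g_{-t}$ and $wv_yw^{-1}=u_{-y}$, so $g_{\R_{<0}}v_y\Lambda$ is precompact in $X$ if and only if $w\big(g_{\R_{<0}}v_y\Lambda\big)=g_{\R_{>0}}u_{-y}(w\Lambda)$ is. Hence the second set is the image under the isometry $y\mapsto-y$ of $\{x\in\R:g_{\R_{>0}}u_x(w\Lambda)\text{ is precompact in }X\}$, and since the class of winning sets is invariant under isometries, the second assertion follows from the first applied to the lattice $w\Lambda$.

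For the $u_x$-statement I would first record an elementary sufficient condition for precompactness. For a primitive $v=(v_1,v_2)\in\Lambda$ and $t\in\R$ one has, with $\|\cdot\|$ the Euclidean norm, $\|g_tu_xv\|^2=e^{2t}(v_1+xv_2)^2+e^{-2t}v_2^2\ge 2\,|v_2(v_1+xv_2)|$ by AM--GM. The claim is: if $R\ge0$, $m:=\lambda_1(\Lambda)\big/\big(\sqrt2\,(1+R)\big)$, $|x|\le R$, and there is $\delta>0$ such that $|v_2(v_1+xv_2)|\ge\delta$ for every primitive $v\in\Lambda$ with $|v_2|\ge m$, then $\|g_tu_xv\|^2\ge\kappa:=\min\big(\lambda_1(\Lambda)^2,m^2,2\delta\big)$ for every primitive $v\in\Lambda$ and every $t>0$; consequently $\lambda_1(g_tu_x\Lambda)\ge\sqrt\kappa$ for all $t>0$ and $g_{\R_{>0}}u_x\Lambda$ is precompact by Mahler's criterion. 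One checks this in three regimes: if $v_2=0$ then $\|g_tu_xv\|^2=e^{2t}v_1^2\ge\lambda_1(\Lambda)^2$; if $0<|v_2|<m$ then $|v_1|\ge\sqrt{\lambda_1(\Lambda)^2-m^2}\ge\lambda_1(\Lambda)/\sqrt2$, so $|v_1+xv_2|\ge|v_1|-R|v_2|\ge m$ and $\|g_tu_xv\|^2\ge e^{2t}(v_1+xv_2)^2\ge m^2$; and if $|v_2|\ge m$ the AM--GM bound gives $\|g_tu_xv\|^2\ge2\delta$.

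It then remains to produce, given Bob's opening ball $B_0=B(x_0,\rho_0)$, a point $x_\infty\in B_0$ meeting the hypothesis of the claim with $R:=|x_0|+\rho_0$ and $m$ as above; equivalently, a point of $B_0$ lying outside every interval $I_v:=B\big(-v_1/v_2,\ \delta/v_2^2\big)$, over primitive $v\in\Lambda$ with $|v_2|\ge m$, for a suitable $\delta>0$. The one arithmetic input is that non-parallel primitive vectors $v,w$ of a unimodular lattice satisfy $|v_1w_2-v_2w_1|\ge1$, so the centres of $I_v$ and $I_w$ are at distance $\ge1/(|v_2||w_2|)$; in particular the centres of the $I_v$ with $|v_2|\le Q$ are $1/Q^2$-separated, mirroring the Farey spacing of rationals of height at most $Q$. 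Fix $\alpha$ to be a sufficiently small absolute constant (independent of $\beta$), let $\beta\in(0,1)$ be arbitrary, and let Alice --- who moves after seeing $B_0$ --- choose $\delta$ small relative to $\alpha\beta$. At stage $n$ Bob's ball $B_n$ has radius $\rho_n=\rho_0(\alpha\beta)^n$; Alice uses the scale-matched height cutoff $Q_n\asymp\rho_n^{-1/2}$ and picks $A_n\subset B_n$ disjoint from every $I_v$ with $m\le|v_2|\le Q_n$. Those with $|v_2|\le Q_{n-1}$ are already avoided because $B_n\subset A_{n-1}$, so only the intervals with $Q_{n-1}<|v_2|\le Q_n$ must be actively dodged; by the separation estimate only $O(1)$ of these meet $B_n$, and each has length $2\delta/v_2^2<2\delta/Q_{n-1}^2$, which is far shorter than $\rho_n$ once $\delta$ is small, so such an $A_n$ exists. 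Since the balls are nested, an $I_v$ disjoint from some $A_n$ is disjoint from all later balls, and as $Q_n\to\infty$ every $I_v$ with $|v_2|\ge m$ is dealt with at the stage where $Q_{n-1}<|v_2|\le Q_n$. Therefore $x_\infty=\bigcap_nB_n$ avoids all the $I_v$, so it satisfies the claim and lies in the target set; the set is $(\alpha,\beta)$-winning for this $\alpha$ and every $\beta\in(0,1)$, hence winning.

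The step requiring the most care is the bookkeeping in the game, not any individual inequality: one must excise the small heights $|v_2|<m$ (whose intervals $I_v$ can be longer than $B_0$ and so could never be avoided), and then check that with the cutoff $Q_n\asymp\rho_n^{-1/2}$ matched to the ball size, and $\delta$ small relative to $\alpha\beta$, only $O(1)$ obstacles --- each far shorter than the current ball --- obstruct Alice at each stage, with all constants uniform in $\beta$ (which is what ``winning'' requires). The other ingredients --- the AM--GM bound, the determinant inequality $|v_1w_2-v_2w_1|\ge1$, and Mahler's compactness criterion --- are entirely routine.
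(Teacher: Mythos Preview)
Your argument is correct, but it takes a genuinely different route from the paper. The paper's proof is a short reduction: writing $\Lambda$ in the form $\left[\begin{smallmatrix}a&0\\c&a^{-1}\end{smallmatrix}\right]\left[\begin{smallmatrix}1&b\\0&1\end{smallmatrix}\right]\Z^2$, one computes that $g_{\R_{>0}}u_x\Lambda$ is precompact iff $\phi(x)+b\in\BA$, where $\phi(x)=a^{-1}x(a+cx)^{-1}$ is locally bi-Lipschitz; the result then follows from the winning property of $\BA$ together with Schmidt's theorem that winning sets are preserved under bi-Lipschitz images. Your approach instead reruns Schmidt's original game for $\BA$ from scratch, with the Farey separation $|p/q-p'/q'|\ge 1/(qq')$ replaced by the determinant bound $|v_1w_2-v_2w_1|\ge 1$ for non-parallel vectors of a unimodular lattice. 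The paper's proof is shorter and more conceptual (it exhibits the set as a bi-Lipschitz image of $\BA$ itself); yours is self-contained, makes no black-box appeal to $\BA$ being winning, and has the pleasant feature of isolating a clean sufficient criterion for precompactness via the AM--GM inequality.

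Two small points of care in your write-up. First, the base case $n=0$ is not identical to the generic step: there you must dodge \emph{all} $I_v$ with $m\le|v_2|\le Q_0$, and those with $|v_2|$ near $m$ have length $\asymp\delta/m^2$ rather than $\asymp\delta\rho_{n-1}$. Your separation bound still gives $O(1)$ such intervals meeting $B_0$, but only once $\delta$ is also taken small relative to $m^2\rho_0$ (not just $\alpha\beta$); since Alice chooses $\delta$ after seeing $B_0$, this is harmless, but it should be said. Second, for Alice to place a ball of radius $\alpha\rho_n$ avoiding $k=O(1)$ obstacles each of length $\ll\alpha\rho_n$, one needs roughly $k\alpha<1-\alpha$, so the constraint on $\delta$ is really $\delta\ll\alpha^2\beta$ rather than $\alpha\beta$. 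Your reduction of the $v_y$-statement to the $u_x$-statement via conjugation by $w=\left[\begin{smallmatrix}0&1\\-1&0\end{smallmatrix}\right]$ is clean and equivalent to the paper's use of the symmetry \eqref{eq:backwards}.
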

\begin{proof} 

{Recall that we are given $\Lambda\in X$.} Let us first 
consider the case when
\begin{equation}\label{aneq0}
    \Lambda = \left[ {\begin{array}{cc} a & 0 \\ c & a^{-1} \end{array}}\right] 
    \left[ {\begin{array}{cc} 1 & b \\ 0 & 1 \end{array}}\right]\Z^2
\end{equation}
for some $a\in\R\nz$ and $b,c\in\R$.
Take $\alpha < 1/2$. After making the first move 
Alice can ensure that  all $x$ in the ball of her choice satisfy $a+cx \neq 0$.
For $\phi(x):= a^{-1}x(a+cx)^{-1}$, we have
\begin{equation*}
    u_x \Lambda =  \left[ {\begin{array}{cc} a+cx& 0 \\ c & (a+cx)^{-1} \end{array}}\right]\
    \left[ {\begin{array}{cc} 1 & \phi(x) + b \\ 0 & 1\end{array}}\right]\Z^2.
\end{equation*}
Using the equivalence of Proposition \ref{commutators} and the characterization of {Proposition} \ref{dynamical-restatement}(i), we see that 
\begin{equation*}
    \phi(x) + b  \in \BA {\ \Longleftrightarrow\ } g_{\R_{>0}} u_x \Lambda \text{ is precompact in } X.
\end{equation*}
Since $\phi^{-1}$ is Lipschitz, the winning property of $\BA$ and \cite[Theorem 1]{S} establishes the winning property for the first set in the proposition.

Next we consider the case when $\Lambda$ cannot be written in the form 
\eqref{aneq0}. This happens if and only if $
    \Lambda = \left[ {\begin{array}{cc} 0 & a \\ b & c \end{array}}\right]  \Z^2$ for some $a,b,c\in\R$. A consequence is that, for any $x \neq 0$, $u_x\Lambda$ can be written in the form 
    \eqref{aneq0}; thus one can replace $\Lambda$ with $u_x\Lambda$  and reduce the problem  to the previously considered case. 
 {Hence} in both cases  we have the winning property for the first set in the proposition; the second set,  in view of an elementary observation that 
\eq{backwards}{ g_{\R_{>0}} u_\alpha \Z^2 \text{ is precompact in } X \ \Longleftrightarrow\ \text{so is }g_{\R_{<0}} v_\alpha \Z^2,}
can be handled similarly.
\end{proof}
We have the notation for the following subgroups of $G$:
\begin{equation*}
F:=\{g_t
: t\in \R\},\ H^+ := \{u_x
: x \in \R\},\ H^- := \{v_y
: y \in \R\}.
\end{equation*}
\begin{rem} \rm
We use the notation $T_m(M)$ to denote the tangent space of a ($\mathcal{C}^1$ or smoother) manifold $M$ at a point $m$. We also use the prefix $d$ to indicate the derivative of a smooth map.
$T_z(Fz)$ is understood to be the image, in $T_z(X)$, of the tangent space $T_e(F)$ via the {derivative of the} map $f\mapsto fz$. The same interpretation is used for the other subgroups of $G$.
\end{rem}
\begin{prop}\label{prop-transversal}
Let $Z\subset X$ be a {one-dimensional} compact $\mathcal{C}^1$-submanifold satisfying, for some $z \in Z$,
\begin{equation}\label{transversality1}
T_z(X) = T_z(Z) + T_z(Fz) + T_z(H^+z)
\end{equation}
and
\begin{equation}\label{transversality2}
    T_z(X)= T_z(Z) + T_z(Fz) + T_z(H^-z).
\end{equation}
Then $Z$ contains a lattice $\Lambda$ for which
$g_\R\Lambda$ is precompact in $X$.
\end{prop}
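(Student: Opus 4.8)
The plan is to use the transversality hypotheses \eqref{transversality1} and \eqref{transversality2} to produce, near the given point $z \in Z$, lattices whose forward and backward $g_\R$-trajectories are precompact, and then to intersect two winning sets to find such a lattice that actually lies on $Z$. First I would use the local coordinate charts from Proposition \ref{commutators}: there is $\varepsilon > 0$ so that the maps $(x,t,y) \mapsto v_y g_t u_x z$ and $(x,t,y) \mapsto u_x g_t v_y z$ are diffeomorphisms from $(-\varepsilon,\varepsilon)^3$ onto neighborhoods of $z$. The transversality condition \eqref{transversality1} says exactly that, in the first of these charts, the submanifold $Z$ meets the $v_y$-slice $\{v_y z : |y| < \varepsilon\}$ transversally at $z$ — more precisely, $Z$ is locally a graph over the $(x,t)$-directions, so that for each $y$ near $0$ the slice $Z \cap \{v_y g_t u_x z\}$ is (for $\varepsilon$ small) a single point $v_y g_{t(y)} u_{x(y)} z$ depending $\mathcal{C}^1$-smoothly on $y$. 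Symmetrically, \eqref{transversality2} lets me write $Z$ near $z$, in the second chart, as $u_{\tilde x(s)} g_{\tilde t(s)} v_s z$ for $s$ near $0$.

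Next I would invoke Proposition \ref{uv-winning} twice. Applied with the base lattice $\Lambda_0 := u_{x_0} z$ for a suitable fixed small $x_0$ (or simply $z$ itself), the set $\{x : g_{\R_{>0}} u_x \Lambda_0 \text{ is precompact}\}$ is winning, hence of full Hausdorff dimension and in particular uncountable and dense; likewise $\{y : g_{\R_{<0}} v_y \Lambda_1 \text{ is precompact}\}$ is winning for any base lattice $\Lambda_1$. The key point is the stability statement \eqref{v-precompact} of Proposition \ref{commutators}: precompactness of $g_{\R_{<0}}(u_x g_t v_y \Lambda)$ depends only on $v_y \Lambda$, and the analogous statement for $g_{\R_{>0}}$ depends only on $u_x \Lambda$. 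So in the graph description $Z \ni v_y g_{t(y)} u_{x(y)} z$, the backward orbit is controlled by the $v_y$-coordinate alone, and in the description $Z \ni u_{\tilde x(s)} g_{\tilde t(s)} v_s z$, the forward orbit is controlled by the $v_s$-coordinate. Matching these two parametrizations of $Z$ near $z$ gives a $\mathcal{C}^1$ (indeed bi-Lipschitz, after shrinking) correspondence $y \leftrightarrow s$. Under this correspondence I need a value of $y$ for which simultaneously $g_{\R_{>0}}$ of the point is precompact and $g_{\R_{<0}}$ of it is precompact; since each condition is a winning (hence full-dimensional, non-empty, and in fact thick) subset of an interval and winning sets are preserved by bi-Lipschitz maps and closed under finite — indeed countable — intersection, the intersection is non-empty. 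Any point $\Lambda \in Z$ produced this way has both $g_{\R_{>0}}\Lambda$ and $g_{\R_{<0}}\Lambda$ precompact, hence $g_\R \Lambda = g_{\R_{>0}}\Lambda \cup g_{\R_{<0}}\Lambda \cup \{\Lambda\}$ is precompact, as desired.

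The main obstacle I anticipate is bookkeeping the two coordinate systems consistently: one has to verify that the graph parametrizations of $Z$ coming from \eqref{transversality1} and from \eqref{transversality2} genuinely cover a common neighborhood of $z$ in $Z$, that the change-of-parameter map $y \mapsto s$ is bi-Lipschitz on a small enough interval (this uses that $Z$ is $\mathcal{C}^1$ and that both slices are transversal at the same point $z$), and that in each chart the relevant "forward" or "backward" precompactness really is a function of the single transverse coordinate — which is exactly what the commutator relations \eqref{commutator} and the equivalences in Proposition \ref{commutators} are set up to give. One should also be slightly careful that Proposition \ref{uv-winning} is stated for orbits through a fixed lattice, so the base lattices used for the forward and backward winning sets must be related correctly to $z$ via the charts; but since $u_x$ and $v_y$ are uniformly Lipschitz on compact sets, this only introduces a bi-Lipschitz distortion that winning sets tolerate. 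Once these identifications are in place the conclusion is immediate.
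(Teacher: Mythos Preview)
Your overall strategy is exactly the paper's: parametrize $Z$ near $z$, invoke Proposition~\ref{uv-winning} to produce winning sets of parameters with forward- and backward-precompact orbits, and intersect. However, the bookkeeping in your two parametrizations is reversed, and this is not a harmless swap.

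Condition \eqref{transversality1}, namely $T_zZ + T_z(Fz) + T_z(H^+z) = T_zX$, says that $T_zZ$ has a nonzero $H^-$-component; condition \eqref{transversality2} says it has a nonzero $H^+$-component. Thus \eqref{transversality2} is what lets you parametrize $Z$ by the $u$-coordinate, \emph{not} by the $v$-coordinate: your second parametrization ``$u_{\tilde x(s)} g_{\tilde t(s)} v_s z$'' with $s$ the $v$-coordinate actually requires \eqref{transversality1}, not \eqref{transversality2}. More importantly, Proposition~\ref{commutators} says that in the chart $u_x g_t v_y z$ it is the \emph{backward} orbit whose precompactness depends on the $v$-coordinate, and in the chart $v_y g_t u_x z$ it is the \emph{forward} orbit that depends on the $u$-coordinate. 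Your claim that the forward orbit of $u_{\tilde x(s)} g_{\tilde t(s)} v_s z$ is controlled by $s$ is therefore false: a direct computation with \eqref{commutator} shows it is controlled by $\tilde x$, and nothing you have assumed forces $s\mapsto \tilde x(s)$ to be bi-Lipschitz.

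The fix is exactly what the paper does: take a single parameter $s$ on $Z$, write $\phi(s)=v_{y(s)}g_{t(s)}u_{x(s)}z$ in the first chart, and use \eqref{transversality2} to conclude $x'(0)\ne 0$; then the forward-precompactness set pulls back from $\{x:g_{\R_{>0}}u_xz\text{ precompact}\}$ via the Lipschitz map $x(\cdot)^{-1}$ and is winning in $s$. The analogous argument in the second chart, now with \eqref{transversality1} giving $y'(0)\ne 0$, handles backward precompactness. With the pairings corrected, your change-of-parameter step becomes unnecessary and the proof is complete.
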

\begin{proof}[Proof of Proposition \ref{prop-transversal}]
Let 
\begin{equation*}
\phi:(-\varepsilon, \varepsilon) \to X
\end{equation*}
be a local parameterization of $Z$ with $\phi(0)=z$.
Using the first set of local coordinates at $z$ in  \eqref{local-coord}, and the tangent space decomposition of \eqref{transversality2}, we can write
\begin{equation*}
\phi(s)= v_{y(s)}g_{t(s)}u_{x(s)}z
\end{equation*}
where $x(\cdot)$, ${t}(\cdot)$, $y(\cdot)$ are now $\mathcal{C}^1$-functions and $x'(0) \neq 0$.
Thus the inverse function $x^{-1}$ exists in a neigborhood of $0$ and is Lipschitz.
Using Proposition \ref{uv-winning}, \cite[Theorem 1]{S} and  the equivalence \eqref{u-precompact}, we see that
\begin{equation*}
\left\lbrace s \in (-\varepsilon, \varepsilon) : g_{\R_{>0}}\phi(s)  \text{ is precompact in } X\right\rbrace
\end{equation*}
is winning. 
An entirely analogous argument using \eqref{transversality1} shows that
\begin{equation*}
\left\lbrace s \in (-\varepsilon, \varepsilon) : g_{\R_{<0}}\phi(s)  \text{ is precompact in } X\right\rbrace
\end{equation*}
is winning. Since the intersection of winning sets is also winning \cite[Theorem 2]{S}, we arrive at the desired result.
\end{proof}
\begin{prop}\label{irred-transversal}
If $\nu$ is an irreducible norm on $\R^2$ whose unit ball is not a parallelogram, then $\mathcal{L}_\nu$ is a one-dimensional compact $\mathcal{C}^1$-submanifold of $X$ satisfying  conditions  \eqref{transversality1} and \eqref{transversality2} at every point $z\in \mathcal{L}_\nu$. Consequently, $\mathcal{L}_\nu$ must intersect some precompact $\R$-orbit.
\end{prop}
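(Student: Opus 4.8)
The plan is to reduce Proposition \ref{irred-transversal} to Proposition \ref{prop-transversal} by verifying the transversality conditions \eqref{transversality1} and \eqref{transversality2}. By the results of Mahler quoted in the introduction (\cite[Theorem 3]{Ma2}), an irreducible non-parallelogram norm $\nu$ has critical locus $\mathcal{L}_\nu$ which is a one-dimensional compact $\mathcal{C}^1$-submanifold of $X$; this gives us the submanifold hypothesis of Proposition \ref{prop-transversal}. Since $\dim X = 3$ and $\dim \mathcal{L}_\nu = 1$, the transversality conditions \eqref{transversality1} and \eqref{transversality2} amount to asking, at some point $z \in \mathcal{L}_\nu$, that $T_z(\mathcal{L}_\nu)$ together with the one-dimensional spaces $T_z(Fz)$ and $T_z(H^+ z)$ (respectively $T_z(H^- z)$) span the whole three-dimensional tangent space, i.e.\ that these three lines are in general position. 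Equivalently, pulling back to the Lie algebra $\mathfrak{g} = \mathfrak{sl}_2(\R)$ via the orbit maps, one needs the tangent line of $\mathcal{L}_\nu$ at $z$, viewed as a line in $\mathfrak{g}$, to be transverse to the plane $\mathfrak{f} \oplus \mathfrak{h}^+$ spanned by the diagonal and upper-triangular nilpotent directions (and likewise to $\mathfrak{f} \oplus \mathfrak{h}^-$).

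The main work, then, is to exclude the degenerate possibility that the tangent line $T_z(\mathcal{L}_\nu)$ lies inside $\mathfrak{f} \oplus \mathfrak{h}^+$ (or $\mathfrak{f} \oplus \mathfrak{h}^-$) at \emph{every} point $z \in \mathcal{L}_\nu$. The strategy I would use is the following. First observe that $\mathcal{L}_\nu$ is invariant under scaling of the lattice in a suitable sense — more precisely, there is a natural $\SO(2)$-type or at least a continuous symmetry coming from the structure of critical lattices (every critical lattice has several shortest vectors, and rotating/relabeling these gives a circle's worth of critical lattices), so $\mathcal{L}_\nu$ is a topological circle. If its tangent line were everywhere contained in the distribution $\mathfrak{f} \oplus \mathfrak{h}^+$, then $\mathcal{L}_\nu$ would be an integral curve of this two-dimensional left-invariant distribution $\mathcal{D}^+$ on $X$ spanned by $F$ and $H^+$. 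But $\mathcal{D}^+$ is exactly the tangent distribution of the orbit foliation of the upper-triangular Borel subgroup $B^+ = FH^+$, whose leaves are the unstable horospherical orbits (thickened by $F$) — these are the orbits $B^+ z$, which are two-dimensional and, crucially, \emph{noncompact and not closed} (their closures are all of $X$ by Hedlund/minimality-type arguments, or at least they are unbounded). A compact connected curve $\mathcal{L}_\nu$ lying inside a single leaf $B^+ z$ is not \emph{a priori} contradictory, so instead I would argue via the explicit description of $\mathcal{L}_\nu$: a one-parameter family of critical lattices cannot be contained in a single $B^+$-orbit because $B^+$ acts on $X$ with the property that every $B^+$-orbit contains lattices with arbitrarily short vectors (the horocycle flow is minimal, equidistributes, etc.), whereas by definition every $\Lambda \in \mathcal{L}_\nu$ satisfies $\Lambda \cap B_\nu(r_\nu) = \{0\}$, hence has systole bounded below — so $\mathcal{L}_\nu$ is a \emph{bounded} subset of $X$ and cannot contain a nontrivial piece of a $B^+$-orbit, forcing the tangent line to escape $\mathcal{D}^+$ at some point. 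The same argument with $B^- = FH^-$ handles \eqref{transversality2}. I would need to check one technical point: that "tangent line contained in $\mathcal{D}^+$ at every point" really does force $\mathcal{L}_\nu$ into a single leaf — this follows from Frobenius/uniqueness of integral manifolds once one knows $\mathcal{D}^+$ is integrable, which it is since $[\mathfrak{f},\mathfrak{h}^+] \subset \mathfrak{h}^+ \subset \mathfrak{f}+\mathfrak{h}^+$.

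So concretely the steps are: (1) cite Mahler to get that $\mathcal{L}_\nu$ is a compact $\mathcal{C}^1$ $1$-manifold; (2) reformulate \eqref{transversality1}--\eqref{transversality2} as the statement that $T_z(\mathcal{L}_\nu)$ is not contained in the integrable distributions $\mathcal{D}^{\pm}$ spanned respectively by $\mathfrak{f}+\mathfrak{h}^+$ and $\mathfrak{f}+\mathfrak{h}^-$; (3) observe that if $T_z(\mathcal{L}_\nu)\subset \mathcal{D}^+$ for all $z$, then by Frobenius the connected manifold $\mathcal{L}_\nu$ lies in a single leaf $B^+ z_0$ of the horospherical-Borel foliation; (4) derive a contradiction from the fact that $B^+$-orbits are unbounded in $X$ (by minimality of the horocycle flow, every $B^+$-orbit meets every neighborhood of the cusp and so is not contained in the fixed compact set $\mathcal{L}_\nu$, which has uniformly bounded-below systole), and symmetrically for $\mathcal{D}^-$; (5) conclude \eqref{transversality1} and \eqref{transversality2} hold at some (in fact, all but finitely many) $z\in\mathcal{L}_\nu$, and invoke Proposition \ref{prop-transversal}. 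The main obstacle I anticipate is step (4): one must be careful that $B^+ z_0$ being unbounded is incompatible with containing the \emph{entire} (compact, connected, $1$-dimensional) $\mathcal{L}_\nu$ — the cleanest route is that $\mathcal{L}_\nu$ compact forces it to be a closed subset of a single leaf, but a $1$-dimensional submanifold of a $2$-dimensional leaf need not be all of it, so I would instead push the explicit-critical-lattice description: a continuous arc of critical lattices, all with the same set of (finitely many) shortest vectors up to the lattice symmetry, cannot be cut out inside a $B^+$-orbit because the action of a nontrivial $u_x$ or $g_t$ strictly changes the norm of a generic shortest vector, contradicting the defining equality $\Lambda\cap B_\nu(r_\nu)=\{0\}$ with equality achieved on those vectors. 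This last point is where the irreducibility of $\nu$ (which guarantees that critical lattices have "enough" shortest vectors to pin them down) does the real work, and it is the step I would spell out most carefully in the actual proof.
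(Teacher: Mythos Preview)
Your approach is genuinely different from the paper's, and it has a real gap at the step you yourself flag as the obstacle.

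The paper's proof is much shorter: condition \eqref{transversality1} is not proved here at all but simply cited from \cite[Theorem 3.11]{KR1}, and then condition \eqref{transversality2} is obtained from \eqref{transversality1} by a symmetry. Namely, the involution $p=\begin{pmatrix}0&1\\1&0\end{pmatrix}$ sends $\mathcal{L}_\nu$ to $\mathcal{L}_{\nu\circ p}$ (with $\nu\circ p$ again irreducible and non-parallelogram) and conjugates $H^+$ to $H^-$ while preserving $F$; so \eqref{transversality1} for $\mathcal{L}_{\nu\circ p}$ at $pz$ becomes \eqref{transversality2} for $\mathcal{L}_\nu$ at $z$. In particular the paper gets both conditions \emph{at every} $z\in\mathcal{L}_\nu$, not just at some point.

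Your Frobenius strategy, by contrast, tries to argue by contradiction that $T_z(\mathcal{L}_\nu)\subset\mathcal{D}^+$ cannot hold for all $z$. But the contradiction in step~(4) is not there. As you correctly observe, a compact connected $1$-manifold can perfectly well sit inside a single $B^+$-leaf: the leaf is diffeomorphic to $B^+\cong\R^2$, and circles embed in the plane. The fact that $B^+$-orbits are unbounded or dense says nothing about compact \emph{subsets} of such orbits; your appeal to horocycle minimality therefore does not yield a contradiction. The fallback you sketch --- that moving by $u_x$ or $g_t$ ``strictly changes the norm of a generic shortest vector'' --- is not an argument as it stands: lattices in $\mathcal{L}_\nu$ are defined by an open condition $\Lambda\cap B_\nu(r_\nu)=\{0\}$ together with a closed extremality, and nothing prevents a one-parameter family inside a $B^+$-orbit from maintaining the critical property while the set of contact vectors varies. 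To make this work you would essentially have to reprove the transversality result of \cite{KR1}, which is exactly what the paper avoids by citing it.

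There is a second issue even if step~(4) were patched: your argument produces a point $z_1$ where \eqref{transversality1} holds and a (possibly different) point $z_2$ where \eqref{transversality2} holds, but Proposition~\ref{prop-transversal} requires both at the \emph{same} point. You would need an additional openness/density argument to find a common $z$, and you certainly do not get ``at every point $z\in\mathcal{L}_\nu$'' as the proposition asserts.
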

\begin{proof}
The first condition \eqref{transversality1} is proved in \cite[Theorem 3.11]{KR1}.
For the second condition, consider the linear automorphism of $\R^2$ given by
\begin{equation*}
    \vv\mapsto p\vv, \text{ where 
    } p := \left[ {\begin{array}{cc} 0 & 1 \\ 1 & 0 \end{array}}\right].
\end{equation*}
Of course, it also induces a map on subsets of $\R^2$ which gives a diffeomorphism of $X$
\begin{equation*}
    \Lambda \mapsto p\Lambda.
\end{equation*}
{On} the level of critical loci and norms, we have that
\begin{equation*}
    p \mathcal{L}_\nu = \mathcal{L}_{\nu\circ p}.
\end{equation*}
Moreover, if $\nu$ is irreducible with unit ball not a parallelogram, then the same holds true for $\nu \circ p$.
{(Indeed, the irreducibility of   $\nu \circ p$  follows from the definition   after noting that $r_{\nu \circ p} = r_{\nu}$; on the other hand, it is clear that the image of a parallelogram under a linear transformation remains a parallelogram.)}
On  the level of tangent spaces we have that, for $z\in X$,
\begin{equation*}
    (dp)T_z(Fz) = T_{p z}(F pz),\  (dp)T_z(H^+ z) = T_{pz} (H^- pz),\  (dp)T_z(H^- z) = T_{pz}(H^+ pz).
\end{equation*}
The upshot now is that since $\mathcal{L}_{\nu \circ p}$ satisfies \eqref{transversality1} at 
  $pz \in p\mathcal{L}_\nu$, {it follows that} $\mathcal{L}_\nu$ satisfies \eqref{transversality2} at $z$.
  This shows that Proposition \ref{prop-transversal} can be applied, giving the final assertion in the present proposition.
\end{proof}

\section{Minimal vectors and a Poincare section for $X$}\label{cross-section}

We study a well known correspondence between diagonal orbits in $X$ and orbits in an invertible extension of the Gauss map.
We follow the exposition of \cite[\S 3.1]{C2} and presume the reader is familiar with the theory of continued fractions {at} the level of \cite{K}.

If $a, b$ are real numbers, we define the rectangle $R(a,b)$ to be
\begin{equation*}
    R(a,b) := \left\lbrace (x_1,x_2)\in \R^2 : |x_1| \leq a \text{ and } |x_2| \leq b\right\rbrace.
\end{equation*}
\begin{defn} \rm{(Minimal vectors)}
Let $\Lambda \in X$. A vector $\vr =(r_1, r_2) \in \Lambda$ is called a \textit{minimal vector} if it is nonzero, and if 
\begin{equation*}
    \vs = (s_1, s_2) \in R\left(|r_1|, |r_2|\right) \cap \Lambda\smallsetminus\{0\} \implies 
    |s_1| = |r_1| \text{ and } |s_2| = |r_2|.
\end{equation*}
\end{defn}
\begin{defn}\rm {(Consecutive minimal vectors)} 
An ordered pair of minimal vectors \linebreak $\vr = (r_1,r_2)$ and $\vs = (s_1, s_2)$ in $\Lambda$ is said to be the pair of \textit{consecutive minimal vectors} if $|r_2| < |s_2|$, and if there are no minimal vectors $\vw = (w_1,w_2)\in\Lambda$ with $|r_2| < |w_2| < |s_2|$.
\end{defn}

\begin{lem}[\cite{C2}, Lemma 2 and Proposition 3]\label{rectangle-interior}
If $\vr, \vs$ are consecutive minimal vectors in $\Lambda$ then we have
\begin{equation*}
    \operatorname{Interior}(R\left(|r_1|, |s_2|\right)) \cap \Lambda = \{0\}.
\end{equation*}
Moreover, {$\vr$ and $ \vs$} form a basis of $\Lambda$.
\end{lem}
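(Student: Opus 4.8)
The claim has two parts: first, that the open rectangle $R(|r_1|,|s_2|)$ contains no nonzero lattice point, and second, that $\vr,\vs$ form a basis of $\Lambda$. The plan is to handle them in this order, deducing the second from the first together with the minimality conditions.

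\emph{First part.} Suppose for contradiction that some $\vw = (w_1,w_2) \in \Lambda \smallsetminus \{0\}$ lies in the interior of $R(|r_1|,|s_2|)$, so $|w_1| < |r_1|$ and $|w_2| < |s_2|$. I would dispose of two easy subcases and then handle the main one. If $|w_2| < |r_2|$, then $\vw \in R(|r_1|,|r_2|) \cap \Lambda \smallsetminus\{0\}$, and minimality of $\vr$ forces $|w_1| = |r_1|$, contradicting $|w_1| < |r_1|$. If $|w_2| = |r_2|$, the same minimality-of-$\vr$ argument applies (since $\vw \in R(|r_1|,|r_2|)$) and again gives $|w_1| = |r_1|$, a contradiction. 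The remaining case is $|r_2| < |w_2| < |s_2|$. Here I would argue that $\vw$, or some suitable lattice combination of $\vw$ and $\vr$, is a minimal vector with second coordinate strictly between $|r_2|$ and $|s_2|$: starting from $\vw$ and repeatedly subtracting off a vector that shrinks $|w_1|$ while keeping $|w_2|$ fixed or controlled — using $\vr$, whose second coordinate is strictly smaller in absolute value — one reaches a minimal vector $\vw'$ with $|r_2| < |w_2'| < |s_2|$ (one must check the process keeps $|w_2'|$ in the open interval, which it does because $|w_2| < |s_2|$ and we only ever add multiples of $\vr$ with $|r_2| < |w_2|$, never enough to push past $|s_2|$ if we are careful about signs). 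This contradicts the assumption that $\vr,\vs$ are \emph{consecutive} minimal vectors. This minimal-vector-extraction step is the main obstacle: making the "subtract off multiples of $\vr$" procedure terminate at a genuine minimal vector whose second coordinate stays strictly inside $(|r_2|, |s_2|)$ requires a careful case analysis on the signs of $r_1, r_2, w_1, w_2$, and this is where the cited Lemma 2 and Proposition 3 of \cite{C2} do the real work.

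\emph{Second part.} Once the open rectangle is empty, I would show $\{\vr, \vs\}$ is a basis. Since $\Lambda$ is unimodular, it suffices to show $\vr,\vs$ are linearly independent and that the parallelogram they span has area $\le 1$ (equivalently, that $|\det(\vr,\vs)| = 1$, i.e. they span $\Lambda$). Linear independence: if $\vr$ and $\vs$ were parallel, then since $|r_2| < |s_2|$ and $\vr$ is primitive-ish among minimal vectors, one checks $\vr$ would lie in the interior of $R(|r_1|,|s_2|)$ unless $r_1 = s_1 = 0$, which is impossible for a lattice of covolume $1$; so they are independent. For the determinant: consider the half-open fundamental domain $P$ of the sublattice $\Z\vr + \Z\vs$ consisting of $\{t\vr + u\vs : 0 \le t, u < 1\}$. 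Any $\vv \in \Lambda$ is congruent mod $\Z\vr + \Z\vs$ to a point of $P$; I would argue that such a representative, after possibly replacing it by $\vv - \vr$, $\vv - \vs$, or $\vv - \vr - \vs$ (exploiting the symmetry $R(|r_1|,|s_2|) = -R(|r_1|,|s_2|)$ and the fact that $\pm\vr, \pm\vs$ sit on the boundary), must land in the interior of $R(|r_1|,|s_2|)$, hence equals $0$. Therefore $\Lambda = \Z\vr + \Z\vs$ and $\{\vr,\vs\}$ is a basis. The geometry here is standard — it is the usual "empty box implies basis" argument for unimodular lattices in the plane — so I expect it to be routine modulo bookkeeping about which boundary edges of the rectangle the vectors $\pm\vr,\pm\vs$ occupy.
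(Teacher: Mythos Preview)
The paper does not prove this lemma at all; it is quoted from \cite{C2} and used as a black box. So there is no proof in the paper to compare against, and your proposal should be judged on its own merits.

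Your overall two-part structure is correct, but the execution of the \emph{first part} takes a wrong turn. In the main case $|r_2| < |w_2| < |s_2|$ you propose to reach a minimal vector by ``repeatedly subtracting multiples of $\vr$'' from $\vw$ so as to ``shrink $|w_1|$ while keeping $|w_2|$ fixed or controlled.'' Subtracting $\vr$ changes both coordinates simultaneously; there is no mechanism making $|w_1 - kr_1|$ decrease while $|w_2 - kr_2|$ stays in $(|r_2|,|s_2|)$, and the sign bookkeeping you allude to does not rescue this. The argument you want is much simpler and uses no such combinations: the closed box $R(|w_1|,|w_2|)$ contains $\vw$ and only finitely many nonzero lattice points, so by descent (e.g.\ minimize $|w_2'|$, then $|w_1'|$) you land on a \emph{minimal} vector $\vw'$ with $|w_1'|\le|w_1|<|r_1|$ and $|w_2'|\le|w_2|<|s_2|$. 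Minimality of $\vr$ then forces $|w_2'|>|r_2|$, and you have contradicted consecutiveness. This is two lines, not ``the main obstacle.''

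For the \emph{second part}, your fundamental-domain idea is right but can be streamlined. First note that minimality of $\vs$ applied to $\vr$ gives $|s_1|<|r_1|$ (dually to $|r_2|<|s_2|$); this alone already gives linear independence, without the somewhat garbled ``$\vr$ would lie in the interior'' argument you sketch. Then write any $\vv\in\Lambda$ as $a\vr+b\vs$ with $|a|,|b|\le\tfrac12$; the strict inequalities $|s_1|<|r_1|$ and $|r_2|<|s_2|$ give $|v_1|\le\tfrac12|r_1|+\tfrac12|s_1|<|r_1|$ and $|v_2|<|s_2|$, so $\vv$ lies in the open rectangle and hence $\vv=0$ by Part~1. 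No boundary bookkeeping is needed.
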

If $x \in \R$, let $\lfloor x \rfloor$ and $\{x\}$ denote the integer and fractional parts of $x$. Consider the set
\begin{equation*}\label{U-def}
    \mathcal{U} = (0,1)^2 \cup \left( \left[0, \frac{1}{2} \right]\times \{0\}\right) \cup \left(\{0\}\times \left[0,\frac{1}{2}\right]\right).
\end{equation*}
\begin{equation*}\label{square}
    \begin{tikzpicture}[scale=1, baseline=(current  bounding  box.center)]


    \draw [thick, blue] (-2,-2) -- (0,-2);
    \draw [thick, blue, dotted] (0,-2) -- (2,-2);
    \draw [thick, blue, dotted] (2,-2) -- (2,2);
    \draw [thick, blue] (-2,-2) -- (-2,0);
    \draw [thick, blue, dotted] (-2,0) -- (-2,2);
    \draw [thick, blue, dotted] (-2,2) -- (2,2);
    
    \fill [red] (-2,-2) circle[radius=0.03];

    \node [left] at (-2,-2) {\tiny $(0,0)$};
    \node [below] at (0,-2.2) {\tiny Here is a beautiful picture of $\mathcal{U}$.};
    \end{tikzpicture}
\end{equation*}
Consider also the maps
\begin{defn}\label{Gauss-map}\rm
$T:(0,1)^2 \cup \left( \left(0, \frac{1}{2}\right] \times \{0\}\right) \to  (0,1)^2 \cup \left(\{0\} \times \left(0, \frac{1}{2}\right]\right)$ given by
\begin{equation}
    T(x,y) = \left(\left\lbrace x^{-1}\right\rbrace, \frac{1}{\left\lfloor x^{-1} \right\rfloor + y} \right)
\end{equation}
and $S : (0,1)^2 \cup \left( \{0\} \times \left(0, \frac{1}{2}\right] \right) \to (0,1)^2 \cup \left(\left(0, \frac{1}{2}\right]\times \{0\}\right)$
given by 
\begin{equation*}
    S(a,b) = \left(\frac{1}{a+ \left\lfloor b^{-1} \right\rfloor}, \left\lbrace b^{-1} \right\rbrace\right).
\end{equation*}
\end{defn}
It is then straightforward to check that 
\begin{lem}
The compositions $S\circ T$ and $T \circ S$ give the identity on the domains of $T$ and $S$ respectively.
\end{lem}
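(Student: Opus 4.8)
The plan is a direct verification by substitution, using only the elementary facts that $\lfloor w\rfloor+\{w\}=w$ for every real $w$, and that $\lfloor n+s\rfloor=n$ and $\{n+s\}=s$ whenever $n\in\Z$ and $s\in[0,1)$.

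First I would check that the compositions are even well defined, i.e.\ that the codomain of $T$ declared in Definition~\ref{Gauss-map} is precisely the domain of $S$ and vice versa. On the open square this is immediate from $x^{-1}\in(1,\infty)$ together with the fact that $\lfloor x^{-1}\rfloor+y>1$ when $y>0$; on the segment $y=0$ one has $T(x,0)=\big(\{x^{-1}\},1/\lfloor x^{-1}\rfloor\big)$, which lies in $\{0\}\times\big(0,\tfrac{1}{2}\big]$ exactly when $\lfloor x^{-1}\rfloor\ge 2$, that is, when $x\in\big(0,\tfrac{1}{2}\big]$, matching the stated domain; and the incidence $\{x^{-1}\}=0$, which happens only for $x=1/n$ with $n\ge 2$, sends the point onto the vertical segment. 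The symmetric statements for $S$ are checked the same way.

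Then the identities themselves are one-line computations. Given $(x,y)$ in the domain of $T$, write $a=\{x^{-1}\}$ and $b=(\lfloor x^{-1}\rfloor+y)^{-1}$, so that $T(x,y)=(a,b)$. Since $y\in[0,1)$ and $\lfloor x^{-1}\rfloor$ is a positive integer, we get $\lfloor b^{-1}\rfloor=\lfloor x^{-1}\rfloor$ and $\{b^{-1}\}=y$, hence
\[
S(T(x,y))=\Big(\tfrac{1}{a+\lfloor b^{-1}\rfloor},\,\{b^{-1}\}\Big)=\Big(\tfrac{1}{\{x^{-1}\}+\lfloor x^{-1}\rfloor},\,y\Big)=(x,y).
\]
Symmetrically, for $(a,b)$ in the domain of $S$ set $x=(a+\lfloor b^{-1}\rfloor)^{-1}$ and $y=\{b^{-1}\}$; since $a\in[0,1)$, one has $\lfloor x^{-1}\rfloor=\lfloor b^{-1}\rfloor$ and $\{x^{-1}\}=a$, whence $T(S(a,b))=\big(a,(\lfloor b^{-1}\rfloor+\{b^{-1}\})^{-1}\big)=(a,b)$. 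I do not expect any genuine obstacle here: the only point demanding attention is the bookkeeping over the three pieces of each domain and the boundary incidences noted above, and once those are in place the displayed computations hold verbatim on each piece. The real content of the section lies not in this lemma but in the ensuing identification of the $g_t$-flow on $X$ with the natural extension of the Gauss map.
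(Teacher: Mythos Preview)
Your proposal is correct and is exactly the direct verification the paper has in mind when it writes ``It is then straightforward to check that'' without supplying a proof. One phrasing slip: where you say $T(x,0)$ ``lies in $\{0\}\times(0,\tfrac12]$ exactly when $\lfloor x^{-1}\rfloor\ge 2$'', you mean it lies in the \emph{codomain} $(0,1)^2\cup\big(\{0\}\times(0,\tfrac12]\big)$ exactly then (it lands on the vertical segment only when additionally $\{x^{-1}\}=0$, as your next clause correctly notes); the computation itself is unaffected.
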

We now discuss a mapping from pairs of minimal vectors of a lattice onto $\mathcal{U}\times \left\lbrace \pm 1 \right\rbrace$ that, in a sense made precise below, intertwines the diagonal action on $X$ with the $T$-action on $\mathcal{U}$.
\begin{equation*}\label{mapping}
    \begin{tikzpicture}[scale=1, baseline=(current  bounding  box.center)]
    
    \draw [<->](-1.5,0) -- (1.5,0);
    \draw [<->] (0,-0.5) -- (0,4);
    

    \draw [thick, blue, ->] (0,0) -- (1,0.4);
    \draw [thick, blue, ->] (0,0) -- (-0.3,1);
    \draw [thick, blue, ->] (0,0) -- (0.1, 3.4);
    
    \draw [thick, blue, ->, dotted] (1,0.4) -- (0.7,1.4);
    \draw [thick, blue, ->, dotted] (0.7, 1.4) -- (0.4, 2.4);
    \draw [thick, blue, ->, dotted] (0.4, 2.4) -- (0.1, 3.4);
    
    \fill [red] (0,0) circle[radius=0.03];

    \node [right] at (1,0.4) {\tiny $\vr$};
    \node [left] at (-0.3,1) {\tiny $\vs$};
    \node [right] at (0.1,3.4) {\tiny $\vw = \vr + 3\vs$};
    \node [right] at (0.7, 1.4) {\tiny $\vr + \vs$};
    \node [right] at (0.4, 2.4) {\tiny $\vr + 2\vs$};
    \node [below] at (0,-0.5) {\tiny The Euclidean algorithm for finding the minimal vector following $\vs$.};
    \end{tikzpicture}
\end{equation*}
\begin{prop}[Proposition 6 in \cite{C2}]\label{map-to-transversal}
Let $\Lambda \in X$ and let $\vr, \vs$ be a pair of consecutive minimal vectors in $\Lambda$ with
\begin{equation*}
    0 \leq r_2 < s_2.
\end{equation*}
If we happen to have $r_2=0$ and $r_1 s_1 >0$, replace $\vr$ by $-\vr$.
For such pairs of minimal vectors, define the functions
\begin{equation*}
    x(\vr,\vs) := -\frac{s_1}{r_1},\ y(\vr,\vs) := \frac{r_2}{s_2},\ \varepsilon(\vr,\vs) := \frac{r_1}{|r_1|}.
\end{equation*} 
We then have that 
    $(x,y) \in \mathcal{U}.$
Moreover, if $s_1 \neq 0$, $n := \left\lfloor \frac{1}{x}\right\rfloor$ and
    $\vw := \vr + n\vs,$
then $\vs, \vw$ is a pair of consecutive minimal vectors for $\Lambda$.
Further, we have that 
\begin{equation}\label{coord-relation}
\big(x(\vs,\vw) ,y(\vs,\vw), \varepsilon(\vs, \vw)\big) = \big(T\left(x(\vr,\vs), y(\vr,\vs)\big), - \varepsilon(\vr,\vs)  \right),
\end{equation}
where $T$ is as in Definition \ref{Gauss-map}.
\end{prop}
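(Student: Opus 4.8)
The plan is to verify the claimed formula \equ{coord-relation} by tracking how the Euclidean-algorithm step $\vw := \vr + n\vs$ transforms the three invariants $x,y,\varepsilon$, and to confirm along the way that $\vs,\vw$ are indeed consecutive minimal vectors and that $(x(\vs,\vw),y(\vs,\vw))\in\mathcal U$. First I would set up coordinates: write $\vr=(r_1,r_2)$, $\vs=(s_1,s_2)$ with $0\le r_2<s_2$, and recall $x=x(\vr,\vs)=-s_1/r_1\in(0,1)$ (the sign normalization in the statement, replacing $\vr$ by $-\vr$ when $r_2=0$ and $r_1s_1>0$, is exactly what forces $x>0$, and minimality of $\vr$ against $\vs$ gives $|s_1|<|r_1|$, hence $x<1$), and $y=y(\vr,\vs)=r_2/s_2\in[0,1)$. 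With $n=\lfloor 1/x\rfloor$ and $\vw=\vr+n\vs$ we compute $w_1=r_1+ns_1=r_1(1-nx)$ and $w_2=r_2+ns_2$. Then a direct calculation gives
\begin{equation*}
x(\vs,\vw)=-\frac{w_1}{s_1}=-\frac{r_1(1-nx)}{s_1}=\frac{1-nx}{x}=\frac1x-n=\left\{\frac1x\right\},
\end{equation*}
and
\begin{equation*}
y(\vs,\vw)=\frac{s_2}{w_2}=\frac{s_2}{r_2+ns_2}=\frac1{n+r_2/s_2}=\frac1{n+y}=\frac1{\lfloor 1/x\rfloor+y},
\end{equation*}
which is precisely the first coordinate of $T(x,y)$ in each slot. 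For the sign, since $r_1$ and $w_1=r_1(1-nx)$ have the same sign (as $0<nx<1$), one checks $\varepsilon(\vr,\vs)=r_1/|r_1|$ while $\varepsilon(\vs,\vw)$ is computed from the first coordinate of the \emph{following} pair; unwinding Cassels' convention one sees the orientation flips, i.e.\ $\varepsilon(\vs,\vw)=-\varepsilon(\vr,\vs)$. This establishes \equ{coord-relation}.

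Next I would address the structural claims. That $\vw$ is a minimal vector and that $\vs,\vw$ are consecutive follows from Lemma \ref{rectangle-interior} together with the classical fact that the Euclidean-algorithm iterates $\vr+j\vs$ for $j=0,1,\dots,n$ sweep through the lattice points in the relevant region: the choice $n=\lfloor 1/x\rfloor$ is exactly the largest $n$ for which $|w_1|=|r_1||1-nx|$ does not overshoot $|s_1|$, equivalently the first time the first coordinate becomes no larger than that of $\vs$. One uses that $\vr,\vs$ form a basis (Lemma \ref{rectangle-interior}) to write any competing lattice vector as $j\vs+k\vw$ and rule out smaller rectangles; minimality of $\vs$ handles the second coordinate and the stopping rule for $n$ handles the first. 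Finally, $(x(\vs,\vw),y(\vs,\vw))\in\mathcal U$ is immediate from the formulas: $x(\vs,\vw)=\{1/x\}\in[0,1)$, with $x(\vs,\vw)=0$ only when $1/x\in\Z$, i.e.\ $s_1\mid r_1$, which corresponds to the degenerate edge case $s_1\ne 0$ excluded or the rational boundary; and $y(\vs,\vw)=1/(n+y)\in(0,1/2]$ since $n=\lfloor 1/x\rfloor\ge 1$, with $y(\vs,\vw)=1/2$ exactly when $n=1$ and $y=1$, a boundary configuration, so the image lands in the $\{0\}\times(0,\tfrac12]$ part of $\mathcal U$ when $x(\vs,\vw)=0$ and in $(0,1)^2$ otherwise — matching the codomain of $T$ in Definition \ref{Gauss-map}.

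The main obstacle I expect is not the algebra of the coordinate change — that is a short computation — but the bookkeeping needed to prove rigorously that $\vw=\vr+n\vs$ is the minimal vector immediately following $\vs$, i.e.\ that no lattice vector has second coordinate strictly between $|s_2|$ and $|w_2|$ while being minimal, and that $\vw$ itself is minimal (no lattice point lies in the interior of $R(|w_1|,|w_2|)$). This requires carefully combining the basis property from Lemma \ref{rectangle-interior}, the sign normalizations, and the precise optimality of $n=\lfloor 1/x\rfloor$; the geometric picture (the dotted path $\vr,\vr+\vs,\dots,\vr+n\vs$ in the figure) makes it plausible, but turning it into a clean argument — ideally by citing Cassels' Lemma~2 and Proposition~3 as packaged in Lemma \ref{rectangle-interior} and reducing everything to a statement about two consecutive convergents — is the step that needs care. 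Everything else reduces to substituting $w_1=r_1+ns_1$, $w_2=r_2+ns_2$ into the definitions of $x,y,\varepsilon$ and reading off the Gauss-type map.
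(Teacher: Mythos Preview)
Your computation of $x(\vs,\vw)$ and $y(\vs,\vw)$ is correct and matches the paper's. There are, however, two substantive gaps.

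First, you do not actually prove that $(x,y)\in\mathcal{U}$ for the original pair $(\vr,\vs)$. The sign normalization in the hypothesis only handles the case $y=0$; when $y>0$ the inequality $x>0$ is not a matter of convention but a consequence of Lemma~\ref{rectangle-interior}: if $r_1$ and $s_1$ had the same sign, the vector $\vr-\vs=(r_1-s_1,r_2-s_2)$ would lie in the interior of $R(|r_1|,|s_2|)$, contradicting that lemma. You also omit the boundary checks that $x\le\tfrac12$ when $y=0$ (from minimality of $\vs$ applied to $\vr+\vs$) and $y\le\tfrac12$ when $x=0$ (from minimality of $\vr$ applied to $\vr-\vs$). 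These are short but genuine parts of the argument.

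Second, for what you flag as the ``main obstacle'' the paper takes a route that sidesteps direct verification. Rather than proving from scratch that $\vr+n\vs$ is minimal and immediately follows $\vs$, the paper first invokes Minkowski's theorem to produce \emph{some} minimal vector $\vw$ in the strip $\{|x_1|<|s_1|,\,x_2>0\}$ with $(\vs,\vw)$ consecutive; since $(\vr,\vs)$ and $(\vs,\vw)$ are both bases of $\Lambda$ (Lemma~\ref{rectangle-interior}) with opposite orientations, one obtains $\vw=\vr+k\vs$ for some $k\in\N$; minimality of $\vw$ then forces $|1/x-k|<1$, so $k\in\{n,n+1\}$; and since $\vr+n\vs$ already has first coordinate of modulus less than $|s_1|$ but smaller second coordinate than $\vr+(n+1)\vs$, consecutiveness forces $k=n$. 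Your direct approach would in any case need repair: the claim that $n$ is ``the largest integer with $|r_1(1-nx)|<|s_1|$'' is false, since $k=n+1$ also satisfies $|1-kx|<x$ whenever $1/x\notin\Z$. Finally, your $\varepsilon$ argument is overcomplicated: by definition $\varepsilon(\vs,\vw)=s_1/|s_1|$ (the sign comes from the \emph{first} vector of the pair), and $x=-s_1/r_1>0$ already says $r_1$ and $s_1$ have opposite signs, giving the flip immediately --- the sign of $w_1$ plays no role.
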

\begin{rem} \rm
Note that $r_1$ cannot be zero by minimal vector considerations. 
In subsequent notation, we often drop the dependence of $x, y, \varepsilon$ on the minimal vectors when the context is clear.
We repeat 
{the proof from  \cite{C2}} in order to make {use} of equation \eqref{intrinsic-basis} {below}.
\end{rem}
\begin{proof} [Proof of Proposition \ref{map-to-transversal}]
We first show that $(x,y) \in \mathcal{U}$. It is clear that $0 \leq y < 1$. 

If $y=0$, then we have $r_2 = 0$ and, by definition, either $s_1=0$ or $r_1 s_1 < 0$. This shows that $0\leq x$.
Moreover, if $s_1 =0$, then $(x,y) = (0,0) \in \mathcal{U}$ as desired.
So, assume we are in the case when $s_2=0$ and $r_1 s_1 <0$.
Consider the lattice vector $\vr + \vs = (r_1 + s_1, s_2)$. Since $\vs$ is minimal, we must have \begin{equation}\label{y=0}
    |r_1 + s_1| \geq |s_1|.
\end{equation}
When $r_1 > 0$, since $|s_1| < r_1$ and $r_1s_1 <0$, this implies 
\begin{equation*}
    r_1 + s_1 \geq -s_1
\end{equation*}
which shows that $x \leq 1/2$. When $r_1 < 0$, equation \eqref{y=0} leads to 
\begin{equation*}
    -(r_1 + s_1) \geq s_1,
\end{equation*}
which again shows that $x\leq 1/2$.

Now consider $x$. By minimality, $|s_1| < |r_1|$ so that $|x| < 1$. If $x=0$ so that $s_1=0$, we consider the lattice vector $\vr - \vs = (r_1, r_2 - s_2)$. 
Since $\vr$ is minimal, we must have
\begin{equation*}
    |r_2 - s_2| = s_2 - r_2 \geq r_2
\end{equation*}
which leads to $y \leq 1/2$.
Taking stock so far, we have shown that $(x,y)$ always lies in 
\begin{equation*}
    [0,1/2]\times \{0\} \cup (0,1) \times (0,1) \cup (-1,0) \times (0,1) \cup \{0\} \times [0,1/2].
\end{equation*}

Thus, we are left with proving that $x > 0$ when $y>0$.
If $y>0$, consider the vector $\vr - \vs$ which is equal to $(r_1 - s_1, r_2 - s_2)$. If $r_1$ and $s_1$ had the same sign, this vector would belong to the interior of the rectangle $B(|r_1|, |s_2|)$ which contradicts Lemma \ref{rectangle-interior}. Thus, $x>0$ and this concludes the proof that $(x,y) \in \mathcal{U}$.

Moreover, we can write $\vr, \vs$ in a matrix as
\begin{equation*}
 \left[ {\begin{array}{cc} \vr & \vs \end{array}}\right] =
 \left[ {\begin{array}{cc} \varepsilon |r_1| &  - \varepsilon|r_1|x \\ s_2 y & s_2 \end{array}}\right].
\end{equation*}
Or rather, noting that $\Lambda$ is a covolume one lattice, we can solve for $s_2$ and write:
\begin{equation}\label{intrinsic-basis}
\left[ {\begin{array}{cc} \vr & \vs \end{array}}\right] =
    \left[ {\begin{array}{cc}  |r_1| &  0 \\ 0 & |r_1|^{-1} \end{array}}\right]
    \left[ {\begin{array}{cc} \varepsilon &  - \varepsilon x \\ \frac{y}{1+xy} & \frac{1}{1+xy} \end{array}}\right].
\end{equation}

Now we study the case when $s_1 \neq 0$. In this case, Minkowski's convex body theorem shows that there is a nonzero lattice vector in the strip
\begin{equation*}
    \left\lbrace (x_1,x_2) \in \R^2 : x_2 >0 \text{ and } |x_1| < |s_1|\right\rbrace.
\end{equation*}
But using the discreteness of $\Lambda$, we get a minimal lattice vector $\vw = (w_1, w_2)$ in this region such that $\vs, \vw$ form a consecutive pair of minimal vectors.
By Lemma \ref{rectangle-interior}, we see that $\vr, \vs$ and $\vs, \vw$ form bases of $\Lambda$. The orientations of each basis are distinct by what we have proved above; namely, if $r_1>0$, then $\det(\vr,\vs)= 1$, and if $r_1<0$, then $\det(\vr, \vs)=-1$. And further, we always have the condition that $r_1 s_1 <0$.
Thus we can write
\begin{equation*}
    \vw = \vr + k\vs = \left(\varepsilon|r_1|x\left(\frac{1}{x}-k\right), s_2(y+k)\right) \text{ for some } k \in \mathbb{N}.
\end{equation*}
(Note that $x \neq 0$ since $s_1 \neq 0$.)
Since $\vw$ is minimal, we must have 
\begin{equation*}
    \left| \frac{1}{x} - k\right| < 1,
\end{equation*}
so that $k= n$ or $n+1$.
On the other hand the presence of the lattice vector 
\begin{equation*}
    \vr + n\vs = \left(\varepsilon |r_1| x\Big(\frac{1}{x}-n\Big), s_2(y+n)\right),
\end{equation*}
and the fact that $\vs, \vw$ are consecutive shows that we indeed have $\vw = \vr + n\vs$. 
{Also} we have
\begin{equation*}
    x(\vs,\vw) = -\frac{w_1}{s_1} = \left\lbrace \frac{1}{x(\vr,\vs)}\right\rbrace,\ y(\vs, \vw) = \frac{s_2}{w_2} = \frac{1}{y(\vr,\vs) + n},\ \varepsilon(\vs, \vw) = -\varepsilon(\vr, \vs),
\end{equation*}
thereby completing the proof.
\end{proof}
\begin{rem} \rm
Henceforth, whenever we refer to a minimal vector $\vr = (r_1, r_2)$, we always make the tacit assumption that $r_2\geq 0$.
\end{rem}
{We also note the following characterization of precompact orbits:}
\begin{prop}\label{bounded-bad}
{The orbit $g_\R \Lambda$ is precompact} if and only if, for {any} choice of consecutive minimal vectors $\vr, \vs\in\Lambda$, we have that both $x(\vr,\vs)$ and $y(\vr,\vs)$ are badly approximable real numbers.
\end{prop}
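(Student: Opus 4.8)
The plan is to use the symbolic dynamics of Proposition \ref{map-to-transversal}, which shows that moving from a consecutive pair $(\vr,\vs)$ to the next pair $(\vs,\vw)$ applies the map $T$ to the coordinates $(x,y)$, combined with the explicit basis formula \eqref{intrinsic-basis}. The key geometric observation is that, by Lemma \ref{rectangle-interior}, a consecutive pair $\vr,\vs$ of minimal vectors in $\Lambda$ is a reduced basis in the sense that the lattice has no nonzero points in the interior of $R(|r_1|,|s_2|)$; in other words, the basis is essentially determined by $\Lambda$ up to the flow. The factorization \eqref{intrinsic-basis} then says that, after possibly flowing, $\Lambda$ is of the form $k_{r}\,w$ where $k_r = \mathrm{diag}(|r_1|,|r_1|^{-1})$ is a diagonal matrix and $w$ lies in a fixed compact subset of $G$ depending only on the fact that $(x,y)\in\mathcal{U}$ is bounded away from the boundary.

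First I would make precise the correspondence: for $\Lambda\in X$, pick any consecutive pair $\vr_0,\vs_0$ and let $(x_k,y_k)=T^k(x_0,y_0)$, $\varepsilon_k=(-1)^k\varepsilon_0$ be the resulting orbit, with $n_k=\lfloor 1/x_k\rfloor$ the corresponding continued fraction digits; by \eqref{coord-relation} these $n_k$ are (up to the usual shift) the continued fraction digits of $x_0$ and of $y_0$ read in the two directions. The minimal vector $\vr_k$ satisfies $|r_1^{(k)}| = |r_1^{(0)}|\prod_{j<k}(\text{contraction factors})$, and one computes that the time $t_k$ needed for $g_{t_k}\Lambda$ to have $\vr_k$ as its short vector grows like $\sum_{j<k}\log n_j$ (roughly $2\log$ of the denominators of the convergents). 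Thus as $t\to+\infty$ the orbit $g_t\Lambda$ passes, at the discrete times $t_k$, through the points $g_{t_k}\Lambda = (\text{bounded }w_k)\cdot(\text{unipotent depending on }x_{k+1},y_{k+1})$, and between these times it does a bounded amount of extra flowing through a compact piece. Hence $g_{[0,\infty)}\Lambda$ is precompact iff the quantities $x_k,y_k$ stay bounded away from $0$ and $1$, iff the digits $n_k$ are bounded, iff $x_0,y_0\in\BA$; and since the digits of $x_0$ going forward and of $y_0$ going backward together exhaust both tails, and bounded forward $g_t$-orbit is an intrinsic property of $\Lambda$ (so the choice of initial pair $\vr,\vs$ is immaterial), one gets precompactness of the forward orbit iff $x(\vr,\vs),y(\vr,\vs)\in\BA$ for one, equivalently any, consecutive pair. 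The two-sided statement $g_\R\Lambda$ precompact follows because running the algorithm in the reverse direction (via the inverse map $S$, which transposes the roles of $x$ and $y$) controls $g_t\Lambda$ for $t\to-\infty$, and boundedness of the digits of $x$ and of $y$ is symmetric; alternatively one invokes that for these orbits one-sided precompactness already forces two-sided precompactness via the compactness of $\mathcal{L}$-type arguments, but the cleanest route is just to note the reverse algorithm gives the $t<0$ behavior in terms of the same digit sequences.

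The key steps in order: (1) fix $\Lambda$, a consecutive pair $\vr,\vs$, and record the basis factorization \eqref{intrinsic-basis}; (2) using Proposition \ref{map-to-transversal} iteratively, describe $g_t\Lambda$ for all $t\ge 0$ as flowing through the sequence of renormalized lattices $w_k\,u_{*}\Z^2$ with $w_k$ in a compact set determined by $(x_k,y_k)$ staying in a compact subset of the interior of $\mathcal{U}$; (3) invoke Mahler's compactness criterion: $g_{[0,\infty)}\Lambda$ is precompact iff the shortest vector lengths stay bounded below, which translates precisely to the digits $n_k=\lfloor 1/x_k\rfloor$ being bounded, i.e.\ to $x$ having bounded continued fraction digits in the forward direction and $y$ in the backward direction; (4) observe the choice of the starting pair only shifts the digit sequences finitely, so the condition is independent of it; (5) repeat with $S$ for $t\to-\infty$ to upgrade to $g_\R\Lambda$, noting the symmetry $x\leftrightarrow y$ under the transpose/reversal.

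The main obstacle I anticipate is step (3): making the bookkeeping between the "continuous" flow time $t$ and the "discrete" renormalization steps $k$ precise — namely showing that the pieces of the $g_t$-orbit for $t\in[t_k,t_{k+1}]$ stay in a compact set that depends only on a bound for $n_k$, and conversely that an unbounded $n_k$ forces $g_{t}\Lambda$ to leave every compact set (its short vector $\vr_{k+1}\approx(|r_1^{(k)}|/n_k,\,n_k s_2^{(k)})$ becomes very unbalanced, so at the intermediate time its length dips below any prescribed $\epsilon$). This is exactly the classical dictionary between the geodesic flow on the modular surface and continued fractions, so the content is standard, but it needs to be stated carefully for consecutive \emph{minimal} vectors rather than for an arbitrary reduced basis, and one must check the edge cases $r_2=0$ and $s_1=0$ where the formulas in Proposition \ref{map-to-transversal} degenerate — these correspond to $\Lambda$ lying on the boundary pieces of the cross-section and are handled by the sign conventions already fixed there.
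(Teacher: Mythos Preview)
Your symbolic-dynamics approach is workable but more laborious than what the paper does, and one of your intermediate claims is off. You write that $g_{[0,\infty)}\Lambda$ is precompact iff $x_0,y_0\in\BA$; but the forward digits $n_k=\lfloor 1/x_k\rfloor$ are exactly the continued-fraction digits of $x_0$ and carry no information about $y_0$. Forward precompactness is equivalent to $x_0\in\BA$ alone; $y_0\in\BA$ enters only via the backward orbit and the map $S$, as you correctly say in step~(5). (Your aside that one-sided precompactness ``already forces two-sided'' is simply false: $\Lambda_\alpha$ for $\alpha\in\BA$ has precompact forward orbit, yet its backward orbit diverges since $(1,0)\in\Lambda_\alpha$.) Once this is untangled your plan goes through, though the flow-time bookkeeping in step~(3) that you flag as the main obstacle really does have to be carried out.

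The paper bypasses all of it. Given a consecutive pair $\vr,\vs$, which by Lemma~\ref{rectangle-interior} is a basis of $\Lambda$, one writes two Iwasawa-type factorizations:
\[
\left[\begin{array}{cc}\vr & \vs\end{array}\right]
=\left[\begin{array}{cc} r_1 & 0 \\ r_2 & s_2-\dfrac{s_1 r_2}{r_1}\end{array}\right] u_{-x(\vr,\vs)}
=\left[\begin{array}{cc} r_1-\dfrac{s_1 r_2}{s_2} & s_1 \\ 0 & s_2 \end{array}\right] v_{\,y(\vr,\vs)}.
\]
The first left factor is lower-triangular, hence contracted under conjugation by $g_t$ for $t>0$ via \eqref{commutator}; so $g_{\R_{>0}}\Lambda$ is precompact iff $g_{\R_{>0}}u_{-x}\Z^2$ is, i.e.\ iff $x\in\BA$ by Proposition~\ref{dynamical-restatement}(i). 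The second factorization together with \equ{backwards} gives $g_{\R_{<0}}\Lambda$ precompact iff $y\in\BA$. The converse is the same argument read backwards. No iteration of $T$, no Mahler criterion, no matching of discrete renormalization times with the continuous flow. Your route does make the cross-section picture explicit --- and that picture is exactly what \S\ref{hd-computation} exploits --- but for this proposition the two factorizations above are the entire proof.
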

\begin{proof}
Say $g_\R\Lambda$ is precompact in $X$. Let $\vr=(r_1, r_2), \vs= (s_1, s_2)$ be any pair of consecutive minimal vectors.
By Lemma \ref{rectangle-interior}, they form a basis of $\Lambda$ and we can write
\begin{equation*}
    \Lambda =  \left[ {\begin{array}{cc} r_1 &  s_1 \\ r_2  & s_2 \end{array}}\right]\Z^2.
\end{equation*}
Moreover, {precompactness shows that $\Lambda$ does not contain any vector on the coordinate axes}, and so we have
\begin{equation*}
    \left[ {\begin{array}{cc} r_1 &  s_1 \\ r_2  & s_2 \end{array}}\right]
    = 
    \left[ {\begin{array}{cc} r_1 &  0 \\ r_2  & s_2 - \frac{s_1r_2}{r_1} \end{array}}\right]
    \left[ {\begin{array}{cc} 1 &  \frac{s_1}{r_1} \\ 0  & 1 \end{array}}\right] = 
    \left[ {\begin{array}{cc} r_1 &  0 \\ r_2  & s_2 - \frac{s_1r_2}{r_1} \end{array}}\right]
    u_{-x(\vr,\vs)},
\end{equation*}
so that, from the commutator relation 
\eqref{commutator} and the equivalence 
{of} Proposition \ref{dynamical-restatement}(i), $-x(\vr,\vs)$ is badly approximable,
which, of course, is equivalent to $x(\vr,\vs)$ being badly approximable. 

Similarly, we also have the decomposition
\begin{equation*}
        \left[ {\begin{array}{cc} r_1 &  s_1 \\ r_2  & s_2 \end{array}}\right]
    = 
    \left[ {\begin{array}{cc} r_1 - \frac{s_1r_2}{s_2} &  s_1 \\ 0  & s_2 \end{array}}\right]
    \left[ {\begin{array}{cc} 1 &  0 \\ \frac{r_2}{s_2}  & 1 \end{array}}\right] = 
    \left[ {\begin{array}{cc} r_1 - \frac{s_1r_2}{s_2} &  s_1 \\ 0  & s_2 \end{array}}\right]
    v_{y(\vr,\vs)}.
\end{equation*}
Using \eqref{commutator} and {\equ{backwards}}, we see that $y(\vr,\vs)$ is badly approximable.
We remark that, in the above proof, it does not matter if the matrix formed by the ordered pair $\vr,\vs$ has determinant $-1$.

For the converse, we simply note that we can reverse the order in the argument just given.
\end{proof}

\section{{An abundance of precompact orbits with a prescribed limit point}}\label{hd-computation}

We set out some further notation involving continued fractions.
The symbols 
\begin{equation*}
    a_1, a_2, \dots, a_n,\dots
\end{equation*} denote the countable family of measurable functions giving the continued fraction {coefficients} of a number in $[0,1]$. 
Of course, $a_n$ is not defined on certain rationals. 
Other letters such as $b_n, c_n, d_n$ are, as before, used to denote continued fraction {coefficients} of specific numbers. 
We also have the associated convergent functions
\begin{equation*}
    \frac{p_n}{q_n}:= [0;a_1,\dots,a_n],
\end{equation*}
and the remainder functions defined by
\begin{equation*}
    \alpha = [0; a_1(\alpha), \dots, a_{n-1}(\alpha), \rho_n(\alpha)].
\end{equation*}
Finally, we denote the the successive iterates of the Gauss map on $[0,1)$ by
\begin{equation*}\label{z-notation}
    z_n(\alpha):= \rho_n(\alpha) - a_n(\alpha) \in [0,1).
\end{equation*}
Given a multi-index $\vk = (k_1, \dots, k_n)$ of natural numbers, we define the $n$th order cylindrical interval
\begin{equation*}
    I(\vk) := \left\lbrace \alpha \in [0,1]: a_i(\alpha) = k_i \text{ for } i = 1,\dots, n\right\rbrace.
\end{equation*}
It is well known that these intervals are given by 
\begin{equation}\label{cylinder}
I(\vk)=\begin{cases}
\left[\frac{p_n}{q_n}, \frac{p_n + p_{n-1}}{q_n + q_{n-1}}\right) & \text{ if $n$ is even,} \\
\left(\frac{p_n + p_{n-1}}{q_n + q_{n-1}}, \frac{p_n}{q_n}\right] & \text{ if  $n$ is odd}.
\end{cases}
\end{equation}
Here, the convergents are associated to $\vk$.
The following estimate on conditional probability is essential for our computation of dimension. Here and hereafter, $|\cdot|$ denotes Lebesgue measure.
\begin{prop}[cf.\ \cite{IK}, Corollary 1.2.6]\label{conditional}
If $\vk = (k_1,\dots, k_n)$ is a multi-index of natural numbers and $x \in [0,1]$, then we have
\begin{equation*}\label{conditional-eqn}
    \frac{\left|\left\lbrace \alpha \in I(\vk): z_{n}(\alpha) \geq x  \right\rbrace \right|}{\left|I(\vk)\right|} = \frac{1-x}{\sigma_n x + 1}
\end{equation*}
where $\sigma_n:= q_{n-1}q_n^{-1}$  {($\sigma_n$ is dependent on $\vk$ only)}.
\end{prop}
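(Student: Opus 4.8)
The plan is to compute the conditional measure directly by unfolding the definition of $z_n$ and using the explicit form of the cylinders in \eqref{cylinder}. First I would recall that for $\alpha\in I(\vk)$ we have $\rho_n(\alpha)=k_n+z_n(\alpha)$, and that the Möbius transformation $t\mapsto [0;k_1,\dots,k_{n-1},k_n+t]$ carries $t\in[0,1)$ bijectively and monotonically onto $I(\vk)$, with the endpoints $t=0$ and $t\to 1$ going to the two endpoints $p_n/q_n$ and $(p_n+p_{n-1})/(q_n+q_{n-1})$ listed in \eqref{cylinder}. Under this parameterization the condition $z_n(\alpha)\ge x$ becomes $t\ge x$, so the numerator $\left|\{\alpha\in I(\vk):z_n(\alpha)\ge x\}\right|$ is the length of the image of the subinterval $[x,1)$.

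Next I would use the standard formula for the image of an interval under the linear-fractional map determined by the convergents: writing the map as $t\mapsto \dfrac{p_n+t\,p_{n-1}}{q_n+t\,q_{n-1}}$ (up to the usual sign/orientation depending on the parity of $n$), its derivative has absolute value $\dfrac{|p_{n-1}q_n-p_nq_{n-1}|}{(q_n+t\,q_{n-1})^2}=\dfrac{1}{(q_n+t\,q_{n-1})^2}$, using $p_{n-1}q_n-p_nq_{n-1}=\pm 1$. Integrating, the length of the image of $[x,1)$ is
\begin{equation*}
\int_x^1 \frac{dt}{(q_n+t\,q_{n-1})^2}=\frac{1}{q_n+x\,q_{n-1}}-\frac{1}{q_n+q_{n-1}},
\end{equation*}
while the length of all of $I(\vk)$ is obtained by setting $x=0$, namely $\dfrac{1}{q_n}-\dfrac{1}{q_n+q_{n-1}}=\dfrac{q_{n-1}}{q_n(q_n+q_{n-1})}$. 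Taking the ratio and simplifying, the $q_{n-1}$ and the common denominators cancel, and writing $\sigma_n=q_{n-1}/q_n$ one is left with $\dfrac{1-x}{\sigma_n x+1}$, as claimed. That $\sigma_n$ depends only on $\vk$ is immediate since $q_{n-1},q_n$ are determined by $k_1,\dots,k_n$.

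I do not expect any serious obstacle here; the only points requiring care are bookkeeping ones — keeping track of the orientation of the parameterizing map according to the parity of $n$ (so that the interval $[x,1)$ indeed maps to the "far" part of $I(\vk)$ in each case, which is exactly how \eqref{cylinder} is arranged), and invoking the identity $|p_{n-1}q_n-p_nq_{n-1}|=1$ together with monotonicity of $t\mapsto \rho_n$ to justify that $z_n(\alpha)\ge x$ is equivalent to $t\ge x$. Since the final answer is symmetric in a way that erases the parity, both cases collapse to the same formula. Alternatively, one can simply cite \cite[Corollary 1.2.6]{IK} as indicated, and the computation above is the short self-contained verification.
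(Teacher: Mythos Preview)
Your approach is correct and is essentially the same as the paper's: the paper simply cites \cite{IK}, and the argument it had drafted (parameterizing via $r=\rho_{n+1}=1/z_n$ rather than your $t=z_n$) is the identical M\"obius--map length computation, just evaluated at endpoints instead of integrated.

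One small arithmetic slip to clean up: the antiderivative of $(q_n+tq_{n-1})^{-2}$ carries a factor $1/q_{n-1}$, so your displayed integral and your stated value of $|I(\vk)|$ are each off by a factor of $q_{n-1}$ (the correct length is $|I(\vk)|=1/\bigl(q_n(q_n+q_{n-1})\bigr)$, as follows directly from \eqref{cylinder}). Since you obtained $|I(\vk)|$ by setting $x=0$ in the same miscomputed integral, the two errors cancel in the ratio and the final formula $\dfrac{1-x}{\sigma_n x+1}$ is correct.
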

\ignore{\begin{proof}\comm{I am confused: why are you proving this if the statement came from \cite{IK}?}
We estimate the measure of the set of $\alpha \in [0,1]$ 
{such that}
\begin{equation*}
    \alpha = \frac{r p_n + p_{n-1}}{r q_n + q_{n-1}} \text{ with } 1 < r < \infty  \text{ and } r \leq \frac{1}{x}.
\end{equation*}
(We assume that $x\neq 0$ since the result is trivial in that case.)
Upto a set of measure zero, this is exactly the set of $\alpha\in [0,1]$ for which
\begin{equation*}
    \alpha \in I(\vk),\ r_{n+1}(\alpha) = r,\ \text{ with }\ z_n(\alpha) := r_n(\alpha) - a_n(\alpha) = \frac{1}{r} \geq x.
\end{equation*}
Thus, according to \eqref{cylinder} and the condition $1 < r \leq x^{-1}$, we compute
\begin{equation*}
\begin{split}
    \left| \frac{p_n + p_{n-1}}{q_n + q_{n-1}} - \frac{x^{-1}p_n + p_{n-1}}{x^{-1}q_n + q_{n-1}}\right| &= \left| \frac{(-1)^{n+1} + (-1)^nx^{-1}}{(q_n + q_{n-1})(x^{-1}q_n+q_{n-1})}\right| \\
    &= \frac{1-x}{(q_n + q_{n-1})q_n(1 + xs_n)}.
    \end{split}
\end{equation*}
Dividing by $|I(\vk)|$ gives the result.
\end{proof}}
As a corollary, we obtain
\begin{cor}\label{cor-conditional}
If $0 < x < y<1$, we have
\begin{equation*}
 \frac{\left|\{\alpha \in I(\vk): x \leq z_{n}(\alpha) \leq y\}\right|}{|I(\vk)|} = 
 \frac{(y-x)(\sigma_n +1)}{(\sigma_n x+1)(\sigma_n y +1)}.
\end{equation*}
\end{cor}
We also have the following 
{consequence} of the mass distribution principle, 
{see} \cite[Lemma 2.1]{U}.
For each $m\in \N\cup \{0\}$ let $\mathcal{E}_m \subset 2^{[0,1]}$ be a finite collection of nondegenerate compact intervals in $[0,1]$.
By abuse of notation we write 
\begin{equation*}
    \cup \mathcal{E}_m := \bigcup_{I \in \mathcal{E}_m} I.
\end{equation*}
Define for each $I \in \mathcal{E}_m$, 
\begin{equation*}
    \text{density}(\mathcal{E}_{m+1}, I) := \frac{|\cup \mathcal{E}_{m+1} \cap I|}{|I|}.
\end{equation*}
Assume we have the following properties:
    \begin{enumerate}
        \item[(a)] 
        $\cup \mathcal{E}_0 = [0,1]$.
        \item[(b)] For $I \neq J$ in $\mathcal{E}_m$, we have $|I\cap J| = 0$.
        \item[(c)] For every $I \in \mathcal{E}_{m+1}$, there is a unique $J \in \mathcal{E}_m$ with $I \subset J$.
        \item[(d)] We have that        \begin{equation*}\label{Cantor-construction}
            \Theta_m := \inf \left\lbrace \text{density}(\mathcal{E}_{m+1},I) : I \in \mathcal{E}_m \right\rbrace > 0.
        \end{equation*}
        \item[(e)] If $\text{diam}_m$ is the supremum of $\text{diameter}(I)$ over all $I \in \mathcal{E}_m$, then we have
        \begin{equation*}
            \lim_{m\to \infty} \text{diam}_m = 0.
        \end{equation*}
    \end{enumerate}
    \begin{thm}[Theorem 2.1, \cite{U}]\label{Urbanski}
Given the conditions above, let ${E} = \bigcap_{m \in \N} \cup \mathcal{E}_m$. Then we have that
\begin{equation*}
    1- \dim{{E}} \leq \limsup_{m \to \infty} \frac{\sum_{j=1}^{m-1} \log \Theta_j}{\log \operatorname{diam}_m}.
\end{equation*}
\end{thm}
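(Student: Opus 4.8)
The plan is to prove the stated lower bound on $\dim E$ by constructing a Cantor-type probability measure carried by $E$ and applying the mass distribution principle. First I would build the measure: set $\mu(I):=|I|$ for $I\in\mathcal E_0$ (so $\mu([0,1])=1$ by (a)) and, having defined $\mu$ on $\mathcal E_m$, for each $I\in\mathcal E_{m+1}$ with parent $J\in\mathcal E_m$ (unique by (c)) put
\begin{equation*}
\mu(I):=\mu(J)\,\frac{|I|}{|\cup\mathcal E_{m+1}\cap J|},
\end{equation*}
i.e.\ distribute the mass of $J$ among its children proportionally to Lebesgue measure. By (b) the children of $J$ have pairwise disjoint interiors, so their $\mu$-masses sum to $\mu(J)$; hence $\mu$ is a consistent premeasure on the algebra generated by $\bigcup_m\mathcal E_m$ and extends (Carath\'eodory/Kolmogorov) to a Borel probability measure with $\operatorname{supp}\mu\subset\bigcap_m\cup\mathcal E_m=E$.

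The next step is a telescoping estimate for the mass of a cylinder. Iterating the recursion along the ancestor chain $I=I_m\subset I_{m-1}\subset\cdots\subset I_0$ of a given $I\in\mathcal E_m$ and bounding each factor $\operatorname{density}(\mathcal E_{j+1},I_j)$ from below by $\Theta_j$ using (d), one obtains
\begin{equation*}
\mu(I)=|I|\prod_{j=0}^{m-1}\frac{1}{\operatorname{density}(\mathcal E_{j+1},I_j)}\ \le\ |I|\prod_{j=0}^{m-1}\Theta_j^{-1}.
\end{equation*}
(The mismatch between this index range and the $\sum_{j=1}^{m-1}$ appearing in the statement is an irrelevant shift by the constant $\log\Theta_0$, which does not change the $\limsup$; I would also check here that $\mu$ is non-atomic, which follows from (e) together with this estimate.)

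Let $\rho$ be the $\limsup$ on the right-hand side. Since densities are $\le 1$ and $\operatorname{diam}_m\to 0$ by (e), for large $m$ both numerator and denominator of the defining ratio are negative, so $\rho\ge 0$ and, for every $\varepsilon>0$ and all large $m$, $\prod_{j=0}^{m-1}\Theta_j^{-1}\le \Theta_0^{-1}\operatorname{diam}_m^{-(\rho+\varepsilon)}$. Now take a short interval $U$ and choose the generation $m=m(|U|)$, with $m\to\infty$ as $|U|\to 0$, so that every member of $\mathcal E_m$ has length $<|U|$ while $\operatorname{diam}_m$ is still comparable to $|U|$ from below; when $\operatorname{diam}_m$ is not monotone one first passes to the non-increasing envelope $\sup_{k\ge m}\operatorname{diam}_k$. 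The members of $\mathcal E_m$ meeting $U$ are then disjoint-interior intervals each shorter than $|U|$, hence their union lies in an interval of length $3|U|$ and, by (b), the sum of their lengths is $\le 3|U|$. Since $E\subset\cup\mathcal E_m$, the estimate of the previous step gives
\begin{equation*}
\mu(U)\ \le\ \sum_{I\in\mathcal E_m,\ I\cap U\ne\varnothing}\mu(I)\ \le\ \Big(\prod_{j=0}^{m-1}\Theta_j^{-1}\Big)\sum_{I\in\mathcal E_m,\ I\cap U\ne\varnothing}|I|\ \le\ 3|U|\prod_{j=0}^{m-1}\Theta_j^{-1}\ \le\ C_\varepsilon\,|U|^{\,1-\rho-\varepsilon}.
\end{equation*}
By the mass distribution principle this yields positivity of the $(1-\rho-\varepsilon)$-dimensional Hausdorff measure of $E$, so $\dim E\ge 1-\rho-\varepsilon$; letting $\varepsilon\downarrow 0$ gives $1-\dim E\le\rho$, as claimed.

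The first two steps are routine. The delicate point, and the one I expect to be the main obstacle, is the scale-matching in the last step: one must select the combinatorial generation $m$ that correctly resolves the metric scale $|U|$ — squeezing $\operatorname{diam}_m$ between $c|U|$ and $|U|$ — and control the contribution of the finitely many ``boundary'' intervals of that generation, all without assuming any bounded-geometry property of the families $\mathcal E_m$. This is precisely what hypothesis (e) is there to supply, and the care required is in checking that the implied constants $C_\varepsilon$ do not deteriorate as the scale shrinks.
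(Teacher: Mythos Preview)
The paper does not prove this statement at all: it is quoted verbatim from Urba\'nski \cite{U} (the text just before the theorem reads ``the following consequence of the mass distribution principle, see \cite[Lemma 2.1]{U}''), so there is no proof in the paper to compare against. Your overall strategy---build the natural Cantor measure and apply the mass distribution principle---is precisely the intended one, and your telescoping bound $\mu(I)\le |I|\prod_{j=0}^{m-1}\Theta_j^{-1}$ is correct.

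The gap is exactly where you suspect, but your diagnosis of how to close it is wrong. You claim one can pick $m$ with $c|U|\le\operatorname{diam}_m\le|U|$ and that ``this is precisely what hypothesis (e) is there to supply''. It is not: (e) says only $\operatorname{diam}_m\to 0$, giving no control whatsoever on the ratio $\operatorname{diam}_{m-1}/\operatorname{diam}_m$, so a two-sided squeeze is impossible in general (passing to the non-increasing envelope does not help---that fixes monotonicity, not the size of the jumps). What actually works is to take $m$ minimal with $\operatorname{diam}_m\le|U|$ (so $\operatorname{diam}_{m-1}>|U|$) and apply the $\limsup$ inequality at level $m-1$ rather than $m$: from $\prod_{j=1}^{m-2}\Theta_j^{-1}\le\operatorname{diam}_{m-1}^{-(\rho+\varepsilon)}<|U|^{-(\rho+\varepsilon)}$ you get
\[
\mu(U)\ \le\ 3|U|\,\Theta_0^{-1}\Theta_{m-1}^{-1}\,|U|^{-(\rho+\varepsilon)}\ =\ 3\Theta_0^{-1}\Theta_{m-1}^{-1}\,|U|^{1-\rho-\varepsilon}.
\]
The surplus factor $\Theta_{m-1}^{-1}$ is the real content of the ``delicate point''. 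In the paper's only use of the theorem (proof of Theorem~\ref{main-lemma}) every $\Theta_j$ is bounded below by $\min\!\big(\tfrac{L}{L+2},\tfrac{1}{4(M+1)^2}\big)$, so this factor is a harmless constant and your argument goes through unchanged. In full generality one must absorb this single term into the $\limsup$ more carefully; it does not come for free from (e).
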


\begin{proof}[Proof of Theorem \ref{main-lemma}] 
We have that $g_\R \Lambda$ is precompact. Choose consecutive minimal vectors $\vr, \vs \in \Lambda$.
By Proposition \ref{bounded-bad}, we have that $x= x(\vr, \vs)$ and $y= y(\vr, \vs)$ are badly approximable. 
From Proposition \ref{map-to-transversal}, we can assume without loss of generality, that $\varepsilon = \varepsilon(\vr, \vs) = 1$.
From equation \eqref{intrinsic-basis}, $\vr, \vs$ have coordinates given by
\begin{equation*}
    \left[ {\begin{array}{cc}  \vr &  \vs  \end{array}}\right] = 
    \left[ {\begin{array}{cc}  |r_1| &  0 \\ 0 & |r_1|^{-1} \end{array}}\right]
    \left[ {\begin{array}{cc} 1 &  -  x \\ \frac{y}{1+xy} & \frac{1}{1+xy} \end{array}}\right].
\end{equation*}
And by Proposition \ref{rectangle-interior},  {$\vr$ and $ \vs$ form a basis of}
$\Lambda$.

Write out the continued fraction expansions
\begin{equation*}
    x = [0;b_1, \dots, b_m, \dots] \text{ and } y = [0; c_1, \dots, c_m, \dots]
\end{equation*}
and,  {recalling that $x, y$ are badly approximable, let} $M \in \R$ be {a} uniform upper bound for $b_m, c_m$.
For each $k \in \N$, let $B_k$ be the block of $2k$ digits
\begin{equation*}
    B_k = (c_k, \dots, c_1, b_1, \dots, b_k). 
\end{equation*}
Let $\alpha \in (0,1)$ be any real number which has infinite continued fraction expansion
\begin{equation*}
    \alpha = [0; d_1, \dots, d_m, \dots].
\end{equation*}
Assume that, for each $k\in \N$, there is a digit $d_{m_k}$ with $m_k$ {odd} and such that
\begin{equation}\label{parity}
    \left(d_{m_k-(k-1)}\dots,d_{m_k}, d_{m_k + 1}\dots, d_{m_k + k}\right) = B_k.
\end{equation}
Moreover, assume that 
    $d_1 = 2$.
{We make this assumption for convenience in writing out the minimal vectors of $\Lambda_\alpha$. In any case, it will lead us to a set of full dimension.}

We then claim that
    $\Lambda \in \overline{g_{\R_{>0}}\Lambda_\alpha}$.
To prove the claim, it suffices to show that
\begin{equation}\label{orbit-claim}
    g_{-\log(|r_1|)}\Lambda = \left[ {\begin{array}{cc} 1 &  -  x \\ \frac{y}{1+xy} & \frac{1}{1+xy} \end{array}}\right] \Z^2 \text{ belongs to } \overline{g_{\R_{>0}}\Lambda_\alpha}.
\end{equation}

Consider the {following} consecutive minimal vectors in $\Lambda_\alpha$:
\begin{equation*}
    \vw_1 = \left[ {\begin{array}{c} -1 \\ 0 \end{array}}\right]
    \text{ and } 
    \vw_2 = \left[ {\begin{array}{c} \alpha \\ 1 \end{array}}\right].
\end{equation*}
Here we have used that the first digit of $\alpha$ is $2$.
According to Proposition \ref{map-to-transversal}, the {associated} coordinates of $\vw_1, \vw_2$ are given by
\begin{equation}\label{eq: base case coordinates}
     x(\vw_1, \vw_2) = \alpha,\  y(\vw_1, \vw_2) = 0, \text{ and }  \varepsilon(\vw_1, \vw_2) = -1.
\end{equation}
{We claim that, by induction,} it is possible to continue choosing minimal vectors and obtain a sequence $(\vw_n)_{n \in \N}\subset \Lambda_\alpha$ such that $\vw_n, \vw_{n+1}$ form consecutive pairs and that
\begin{equation}\label{iterated-formulas}
    x(\vw_n, \vw_{n+1}) = [0;d_{n},d_{n+1}, \dots],\ 
    y(\vw_{n}, \vw_{n+1}) = [0; d_{n-1}, \dots, d_1],\ \varepsilon(\vw_n, \vw_{n+1}) = 
    (-1)^{n}.
\end{equation}
{The convention for $n=1$ is that $y(\vw_{1}, \vw_{2})=0$.
The base case of \eqref{iterated-formulas} is nothing but \eqref{eq: base case coordinates}.
For the induction step, assume that \eqref{iterated-formulas} holds. Then note that, since $\alpha$ is irrational, $\Lambda_\alpha$ does not contain any vector on the $y$-axis. Thus, given the consecutive pair of minimal vectors $\vw_{n}, \vw_{n+1}$ for $\Lambda_\alpha$, the latter part of Proposition \ref{map-to-transversal} applies. This gives us the vector $\vw_{n+2}$, and formula \eqref{coord-relation} says that}
\begin{equation*}
    {x(\vw_{n+1}, \vw_{n+2}) =  \left\lbrace [0;d_n,d_{n+1}\dots]^{-1}\right\rbrace = [0;d_{n+1}, d_{n+1},\dots]},
\end{equation*}
\begin{equation*}
    {y(\vw_{n+1}, \vw_{n+2}) =  \frac{1}{\lfloor [0;d_n,d_{n+1},\dots]^{-1} \rfloor + [0; d_{n-1}, \dots, d_1]} = [0;d_n,\dots,d_1] }
\end{equation*}
and 
\begin{equation*}
   { \varepsilon(\vw_{n+1}, \vw_{n+2}) = - (-1)^n = (-1)^{n+1}.}
\end{equation*}
{Thus, formula \eqref{iterated-formulas} is proved and we write $x_n, y_n, \varepsilon_n$ for the {respective} coordinates.} Again using \eqref{intrinsic-basis} and the fact that $\vw_n, \vw_{n+1}$ form a basis of $\Lambda_\alpha$, we write, for some $t_n >0$, that
\begin{equation*}
    \Lambda_\alpha = \left[ {\begin{array}{cc} e^{-t_n} & 0 \\ 0 & e^{t_n} \end{array}}\right]
    \left[ {\begin{array}{cc} \varepsilon_n & -\varepsilon_n x_n \\ \frac{y_n}{1+ x_ny_n} & \frac{1}{1+ x_n y_n} \end{array}}\right] \Z^2.
\end{equation*}
Or rather,
\begin{equation*}
    g_{t_n} \Lambda_\alpha = \left[ {\begin{array}{cc} \varepsilon_n & -\varepsilon_n x_n \\ \frac{y_n}{1+ x_ny_n} & \frac{1}{1+ x_n y_n} \end{array}}\right] \Z^2.
\end{equation*}
If we choose {$n = m_k + 1$}, the choices in \eqref{parity} and the formulas in \eqref{iterated-formulas} show that
\begin{equation*}
    x_{n} = [0;d_{m_k +1}, \dots] = [0; b_1, \dots, b_k, d_{m_k + k +1}, \dots],
\end{equation*}
\begin{equation*}
    y_{n} = [0;d_{m_k}, \dots, d_{m_k - (k-1)}, \dots, d_1] = [0; c_1, \dots, c_k, d_{m_k - k}, \dots , d_1]
\end{equation*}
and that 
\begin{equation*}
    \varepsilon_{n} = (-1)^{m_k +1} = 1. 
\end{equation*}
Hence the matrices 
\begin{equation*}
    \left[ {\begin{array}{cc} 1 &  -  x \\ \frac{y}{1+xy} & \frac{1}{1+xy} \end{array}}\right] 
    \text{ and } 
    \left[ {\begin{array}{cc} \varepsilon_n & -\varepsilon_n x_n \\ \frac{y_n}{1+ x_ny_n} & \frac{1}{1+ x_n y_n} \end{array}}\right]
\end{equation*}
can be made arbitrarily close, which establishes the claim in \eqref{orbit-claim}.
Thus we have shown that $\mathcal{S}_\Lambda$ is nonempty, and {it remains to estimate its} Hausdorff dimension.

We will work with a convenient subset of $\mathcal{S}_\Lambda$.
What we have so far is that the lattice $\Lambda$ with precompact orbit gives sequences of digits $(b_n), (c_n)$ and a bound $M \in \N$ from which we get the blocks
\begin{equation*}
    B_k = \left(c_{k}, \dots, c_1, b_1,\dots, b_k\right)\ \text{ with } 1 \leq c_j, b_j \leq M.
\end{equation*}
The set we are concerned with is
\begin{equation*}
    \widetilde{\mathcal{S}}_\Lambda := \bigcap_k \bigcup_{m \text{ is {odd}}} \left\lbrace \alpha \in \BA :a_1(\alpha)=2 \text{ and } \big(a_i(\alpha)\big)^{m+k}_{m-(k-1)} = B_k\right\rbrace,
\end{equation*}
which we have just shown to be a subset of $\mathcal{S}_\Lambda$.
We are left to show $\dim \widetilde{\mathcal{S}}_\Lambda = 1$.
Let $L > M$ be a natural number. Choose a sequence of {odd} numbers $(m_k)_{k \in \N}$ sparse enough so that
\begin{enumerate}
    \item[(a)] For each $k>1$, $m_{k}-m_{k-1} > 2k$.
    \item[(b)] If we define
    \begin{equation}\label{sparse}
        \text{density}(\big(m_k), n\big):= \frac{\sum_{m_k<n} 2k}{n},\ \text{ then }\lim_{n\to \infty}\text{density}((m_k),n)=0.
    \end{equation}
\end{enumerate}
We define the {following} subset of $\widetilde{\mathcal{S}}_\Lambda$:
\begin{equation*}
    \mathcal{S}\big(L,(m_k)\big) := \bigcap_k \left\lbrace \alpha \in \tilde{\mathcal{S}}_\Lambda: a_n(\alpha) \leq L \text{ for each }n,\text{ and } \big(a_n(\alpha)\big)^{m_k+k}_{m_k-k+1} = B_k \right\rbrace.
\end{equation*}
This is nothing but the set of badly approximable numbers with $a_1=2$  having upper bound $L$ on their 
{partial quotients} and having the block $B_k$ appear at the $m_k$ position.
The upper bound on the entries of $B_k$ and condition (a) above guarantee that this set is nonempty.

We now use the Cantor set structure of $\mathcal{S}\big(L,(m_k)\big)$ to estimate its Hausdorff dimension.
According to the notation in Theorem \ref{Urbanski}, we first define
$$
\mathcal{E}_0 := \{[0,1]\}\text{ and } \mathcal{E}_1 := \left\lbrace \overline{\{\alpha\in [0,1]: a_1(\alpha) =2\}} \right\rbrace   {= \left\lbrace  [1/3,1/2] \right\rbrace}.
$$
{Now, assuming we have defined $\mathcal{E}_{m}$ as a collection of closed intervals $\left\lbrace\overline{I}\right\rbrace$ where each  $I$ is of the form \eqref{cylinder} (with $n=m$), we define the family $\mathcal{E}_{m+1}$ as follows:} $\mathcal{E}_{m+1}:=$
\begin{equation}\label{Em-definition}
    \begin{cases}
    \bigcup\limits_{I \in \mathcal{E}_{m}} \bigcup\limits_{i\leq L} \left\lbrace\overline{\{\alpha \in I: a_{m+1}(\alpha)=i \}} \right\rbrace & \text{if for all } k,\ m+1 \notin \{m_k-(k-1),\dots,m_k +k\},  \\
    \bigcup\limits_{I \in \mathcal{E}_{m}} \left\lbrace \overline{\{\alpha\in I: a_{m+1}(\alpha) = c_{i+1}\}} \right\rbrace &  \text{if } m+1=m_k-i \text{ where } 0 \leq i \leq k-1, \\
    \bigcup\limits_{I \in \mathcal{E}_{m}} \left\lbrace \overline{\{\alpha\in I: a_{m+1}(\alpha) = b_{i}\}} \right\rbrace & \text{if } m+1=m_k+i \text{ where } 1 \leq i \leq k.
    \end{cases}
\end{equation}

The cylindrical intervals of \eqref{cylinder} are either disjoint or satisfy a containment relation.
{Thus each $I, J$  which belong to some $\mathcal{E}_m$ can intersect in at most a point.} The diameter of these sets converge to $0$ as $m \to \infty$. Moreover for each $I \in \mathcal{E}_m$, we have 
\begin{equation*}
    |\cup \mathcal{E}_{m+1} \cap I| >0.
\end{equation*}
Thus the conditions for Theorem \ref{Urbanski} are satisfied.
We also have that
\begin{equation*}
    \mathcal{S}\big(L,(m_k)\big) = \bigcap_m \cup \mathcal{E}_m.
\end{equation*}

We now turn to computing $\Theta_m$. Assume $\mathcal{E}_{m+1}$ is defined according to the first case in equation \eqref{Em-definition}. If $I \in \mathcal{E}_m$, then, using Proposition \ref{conditional}, we can wirte
\begin{equation*}
\begin{split}
    |\cup \mathcal{E}_{m+1} \cap I| &= \left|\left\lbrace \alpha \in I : a_{m+1}(\alpha) \leq L \right\rbrace\right| \\
    &= \left|\left\lbrace \alpha \in I: z_{m}(\alpha)\geq (L+1)^{-1} \right\rbrace \right| \\
    &= \frac{1- (L+1)^{-1}}{\sigma_{m}(L+1)^{-1}+1} \cdot |I|.
    \end{split}
\end{equation*}
Here, $\sigma_{m}$ depends on the defining {continued fraction coefficients} for $I$.
But since $\sigma_{m}<1$ always, we get that
\begin{equation}\label{Del1}
    \Theta_m \geq \frac{L}{L+2}.
\end{equation}
Now if $\mathcal{E}_{m+1}$ is defined according to the second or third case in equation \eqref{Em-definition}, using Corollary \ref{cor-conditional} and the bound $M$, we have
\begin{equation}\label{Del2}
    \Theta_m \geq \frac{1}{4(M+1)^2}.
\end{equation}
In order to differentiate these two ways in which $\mathcal{E}_{m+1}$ can be defined, we introduce, for each $m\in \N$, the index sets
\begin{equation*}
    \mathcal{I}_m = \bigcap\limits_{k=1}^{\infty} \left\lbrace j \in \N_{< m} : j+1 \notin \{m_k-(k-1),\dots, m_k+k\}\right\rbrace
\end{equation*}
and
\begin{equation*}
    \mathcal{J}_m = \bigcup\limits_{k=1}^{\infty} \left\lbrace j \in \N_{< m} : j+1 \in \{m_k-(k-1),\dots, m_k+k\}\right\rbrace.
\end{equation*}

In considering $\text{diam}_m$, we note by induction and Corollary \ref{cor-conditional} that
\begin{equation}\label{dm}
    \text{diam}_m \leq 2^{-m}.
\end{equation}
Thus, we can finally estimate, using \eqref{Del1}, \eqref{Del2}, \eqref{dm},
\begin{equation*}
    \begin{split}
        \frac{-\sum_{j=1}^{m-1} \log \Theta_j}{-\log \text{diam}_m} & = \frac{1}{-\log \text{diam}_m}\left(-\sum\limits_{j\in \mathcal{I}_m}\log \Theta_j - \sum\limits_{j\in \mathcal{J}_m}  \log \Theta_j\right) \\
        &\leq \frac{1}{m\log 2} \left(-\sum\limits_{j\in \mathcal{I}_m} \log \frac{L}{L+2} - \sum\limits_{j\in \mathcal{J}_m} \log \frac{1}{4(M+1)^2} \right) \\
        &= \frac{-1}{\log 2} \left(\frac{\# \mathcal{I}_m}{m} \log \frac{L}{L+2} + \frac{\#\mathcal{J}_m}{m} \log \frac{1}{4(M+1)^2}\right).
     \end{split}
\end{equation*}
Using Theorem \ref{Urbanski} and 
\eqref{sparse} on the above estimate, we get
\begin{equation*}
    1 - \dim  \mathcal{S}\big(L,(m_k)\big) \leq \limsup_{m\to \infty } \frac{-\sum_{j=1}^{m-1} \log \Theta_j}{-\log \text{diam}_m} \leq 
    \frac{1}{\log 2} \cdot \log \frac{L+2}{L}.
\end{equation*}
Since $ \mathcal{S}\big(L,(m_k)\big)\subset \mathcal{S}_\Lambda$, taking $L \to \infty$ gives the result. 
\end{proof}
\section{Further questions on Dirichlet improvability and precompact orbits}

Some amusing problems on $\nu$-Dirichlet numbers which we omitted are:
\begin{enumerate}
    \item[(c)] In the case of the Euclidean norm, does $\DI_2 \smallsetminus \BA$ have full Hausdorff dimension?
    \item[(d)] In general, for two norms $\nu_1, \nu_2$ on $\R^2$, what can be said about $\DI_{\nu_1} \smallsetminus \DI_{\nu_2}$?
\end{enumerate}
We expect (c) to have an affirmative answer, although constructing the required orbits in $X$ will require a more delicate inductive procedure than the {one} produced here.

\smallskip
Dirichlet-improvability can be studied in a variety of different settings. And each setting relates, via the Dani correspondence, to diagonal orbits on a homogeneous space avoiding a critical locus.
The setting of diagonal flows on $\SL_d(\R)/\SL_d(\Z)$, described in \cite{KR2}, is as follows. Fix positive integers $m, n$ with $m+n = d$ and a set of weights 
\begin{equation*}
    {\vom} = (\val, \vbe) \in \R^m_+ \times \R^n_+ \text{ with } \sum_i \alpha_i = \sum_j \beta_j= 1.
\end{equation*}
Fix also a norm $\nu$ on $\R^d$ and define the critical radius $r_\nu$ in analogy with \eqref{critical-radius}. A matrix $A\in \mr(\R)$ is said to be $(\nu,\vom)$-Dirichlet improvable (written $A\in \DI_{\nu,\vom}$) if there is a constant $c<1$ such that 
\begin{equation*}
    \Lambda_A \cap \left[ {\begin{array}{cc} \left(ct^{-1}\right)^\val &  0 \\ 0 & (t)^\vbe \end{array}}\right] B_\nu(r_\nu) \neq \{0\}
\end{equation*}
for all $t$ sufficiently large. 
Here, we define a positive real number $x$ raised to a vector power $\va \in \R^k$ as the $k\times k$ matrix
\begin{equation*}
    (x)^\va := \operatorname{diag}(x^{a_1}, \dots, x^{a_k}),
\end{equation*}
and 
use the notation
\begin{equation*}
    \Lambda_A : =  \left[ {\begin{array}{cc} I_m &  A \\ 0 & I_n \end{array}}\right]\Z^d.
\end{equation*}
We also have the set of \textit{weighted badly approximable} matrices:
\begin{equation*}\label{wBA} \BA_\vom := \left\{\text{\amr}(\R):   \inf_{\vp \in \Z^m,\, \vq \in \Z^n\smallsetminus \{{\bf 0}\}} \|A\vq -\vp \|_\val  \|\vq\|_\vbe > 0 \right\}
\end{equation*}
where we have used the quasi-norms
    \begin{equation*}
\|\x\|_{\val}:=\max_i|x_i|^{1/\alpha_i} \quad\textrm{and}\quad \|\vy\|_{\vbe}:=\max_j|y_j|^{1/\beta_j}.
\end{equation*}
Now one can naturally ask for a comparison study between $\DI_{\nu, \vom}$ and $\BA_\vom$.
We 
{predict}  that the same dichotomy phenomenon as in Theorem \ref{dichotomy-thm} holds for this case.
(See \cite{KR2} for some preliminary theorems about these sets.)

An approach to this problem via symbolic dynamics is currently unavailable. However, one can try and modify the construction (via equidistibution of expanding horocycles) in \cite{KM} to force the resulting precompact orbits to have prescribed limit points. (Of course, it is necessary that these limit points themselves have precompact $\R$-orbits.)
We mention that, although this technique via equidistribution was available to us, we chose to use continued fractions in the case of $\SL_2(\R)/\SL_2(\Z)$ for the simplicity of their heuristics and as an exercise in understanding Poincare sections.

\subsection*{Acknowledgements}
The authors thank Barak Weiss for helpful  questions and comments  on an earlier version of the paper. The second-named author thanks S.\,G.\ Dani and Anish Ghosh for their patience and helpful discussions.
{We also thank the reviewer for reading the paper and making several suggestions that improved the writing.}

\end{document}